\documentclass[11pt]{amsart}
\usepackage[margin=1.15in]{geometry}
\usepackage{amsmath,amssymb,amsthm,mathtools}
\usepackage{enumitem}
\usepackage{microtype}
\usepackage[hidelinks]{hyperref}
\hypersetup{pdftitle={Weak (1,1) Schur multipliers and the H\"ormander--Mikhlin--Schur theorem},pdfauthor={Yongqiang Tian}}

\numberwithin{equation}{section}
\newtheorem{theorem}{Theorem}[section]
\newtheorem{proposition}[theorem]{Proposition}
\newtheorem{lemma}[theorem]{Lemma}
\newtheorem{corollary}[theorem]{Corollary}
\theoremstyle{definition}
\newtheorem{definition}[theorem]{Definition}
\theoremstyle{remark}
\newtheorem{remark}[theorem]{Remark}

\newcommand{\RR}{\mathbb R}
\newcommand{\C}{\mathbb C}
\newcommand{\Z}{\mathbb Z}

\newcommand{\Tr}{\operatorname{Tr}}
\newcommand{\rank}{\operatorname{rank}}
\newcommand{\dist}{\operatorname{dist}}
\newcommand{\ran}{\operatorname{ran}}
\newcommand{\norm}[1]{\lVert#1\rVert}
\newcommand{\ip}[2]{\langle #1,#2\rangle}
\newcommand{\cB}{\mathcal B}
\newcommand{\cK}{\mathcal K}
\newcommand{\Sone}{\mathcal S^1}
\newcommand{\Stwo}{\mathcal S^2}
\newcommand{\Sw}{\mathcal S^{1,\infty}}
\newcommand{\Wa}{W_{\mathrm{amp}}}
\newcommand{\cD}{\mathcal D}
\newcommand{\cH}{\mathcal H}
\newcommand{\fD}{\mathfrak D}
\newcommand{\maxnorm}[1]{\norm{#1}_\infty}

\begin{document}

\title[Weak $(1,1)$ Schur multipliers and the HMS theorem]{Weak $(1,1)$ Schur multipliers and the H\"ormander--Mikhlin--Schur theorem}

\author{Yongqiang Tian}
\address{School of Mathematics and Statistics, Central South University, Changsha, Hunan 410083, P. R. China}
\email{tianyongqiang@csu.edu.cn}

\subjclass[2020]{Primary 47B10; Secondary 47A30, 42A45, 46L52.}
\keywords{Schur multiplier, weak trace class, H\"ormander--Mikhlin--Schur condition.}

\begin{abstract}
Let $S_m$ be the Schur multiplier with symbol $m$. We characterize the boundedness of $S_m:\mathcal S^1\to\mathcal S^{1,\infty}$ by approximation of weighted columns in a common finite-dimensional subspace. If $W(m)$ is the weak $(1,1)$ bound and $\fD(m)$ is the column approximation constant, then $\tfrac12W(m)\le\fD(m)\le2W(m)$. These estimates hold uniformly under amplification. We also prove the H\"ormander--Mikhlin--Schur weak $(1,1)$ estimate in every Euclidean dimension under the usual integer-order condition. The proof combines Cuculescu's projections, adjacent dyadic grids and tensor polynomial interpolation on a binary block decomposition.
\end{abstract}

\maketitle

\section{Introduction}

\subsection{Notation}
Throughout, Hilbert spaces are complex and separable, and inner products are linear in the first variable. For a vector $\xi$ in a Hilbert space $\cH$, we write $\norm{\xi}_{\cH}=\ip{\xi}{\xi}_{\cH}^{1/2}$ and omit the subscript when the space is clear. In particular, vector norms and distances in $\C^F$ are taken in $\ell^2(F)$. We write $\cB(\cH)$ and $\cK(\cH)$ for the bounded and compact operators on $\cH$, respectively. For $X\in\cK(\cH)$, let $s_1(X)\ge s_2(X)\ge\cdots\ge0$ be its singular values, counted with multiplicity and extended by zeros. We write $\Tr$ for the canonical unnormalized trace and $\norm X_\infty$ for the operator norm of $X$. Put
\[
 \norm X_p=\Tr(|X|^p)^{1/p}=\Bigl(\sum_{n\ge1}s_n(X)^p\Bigr)^{1/p},\qquad
 \norm X_{1,\infty}=\sup_{n\ge1}n\,s_n(X).
\]
The corresponding ideals are the Schatten--von Neumann class $\mathcal S^p$ ($1\le p<\infty$) and the weak trace class $\Sw$. We set $\mathcal S^\infty=\cK(\cH)$ with the operator norm, and $\mathbb N_0=\{0,1,\ldots\}$. If we set
\[
 N(t;X)=\#\{n:s_n(X)>t\}\qquad(t>0),
\]
then $\norm X_{1,\infty}=\sup_{t>0}tN(t;X)$. We shall use the standard inequalities
\begin{align}
 s_n(X)&=\min_{\rank(R)<n}\norm{X-R}_\infty,\label{eq:approxnumbers}\\
 s_{i+j-1}(X+Y)&\le s_i(X)+s_j(Y),\qquad
 s_n(UXV)\le\norm U_\infty\norm V_\infty s_n(X).\label{eq:weyl}
\end{align}
These formulas also hold for rectangular operators; see \cite[Chapter II]{GK69} and \cite[Chapter 11]{BS87}. Constants denoted by $C$ are absolute, and subscripts indicate their permitted dependence; their values may change from line to line.

Fix a countable index set $I$. We identify operators on $\ell^2(I)$ with their matrices in the standard basis, and call a matrix \emph{finitely supported} if it has only finitely many nonzero entries. For a scalar symbol $m=(m_{ij})_{i,j\in I}$, define
\[
 S_m(X)=m\circ X=(m_{ij}x_{ij})_{i,j\in I}
\]
on finitely supported matrices, and set
\begin{equation}\label{eq:W}
 W(m)=\sup\{\norm{S_m(X)}_{1,\infty}:X\text{ finitely supported},\ \norm X_1\le1\}.
\end{equation}
Thus $W(m)<\infty$ is precisely the condition that $S_m$ extend boundedly from $\Sone$ to $\Sw$; see Lemma~\ref{lem:extension}. For a scalar symbol, $\maxnorm m=\sup_{i,j}|m_{ij}|$ denotes the supremum of its entries in absolute value.

For $d\ge1$, write $M_d=\cB(\C^d)$ and let $\mathbf J_d=(1)_{\alpha,\beta=1}^d$. The symbol $m\otimes\mathbf J_d$ on $I\times\{1,\ldots,d\}$ has entries
$$(m\otimes\mathbf J_d)_{(i,\alpha),(j,\beta)}=m_{ij}.$$
Thus
$$S_{m\otimes\mathbf J_d}=S_m\otimes\mathrm{id}_{M_d},\qquad
 \Wa(m)=\sup_{d\ge1}W(m\otimes\mathbf J_d).$$

\begin{remark}\label{rem:rectangular}
A symbol on $I_1\times I_2$ can be regarded as a square symbol on the disjoint union $I_1\sqcup I_2$ by setting
\[
 \widetilde m=\begin{pmatrix}0&m\\0&0\end{pmatrix}.
\]
Compression to a corner is contractive on $\Sone$, and a rectangular matrix has the same nonzero singular values as its embedding in this corner. Hence $W(\widetilde m)=W(m)$ and $\Wa(\widetilde m)=\Wa(m)$. It therefore suffices to use one index set $I$. We retain separate finite row and column sets $F,G\subset I$ in the formulas below.
\end{remark}

For a matrix $c$ on $F\times G$, define its Hilbert-space factorization norm by
\begin{equation}\label{gamma2}
 \gamma_2(c)=\inf\Bigl\{\max_i\norm{u_i}_{\cH}\max_j\norm{v_j}_{\cH}:
 u_i,v_j\in \cH,\ c_{ij}=\ip{u_i}{v_j}_{\cH}\Bigr\},
\end{equation}
where the infimum runs over common Hilbert spaces $\cH$ and all such factorizations. For an infinite symbol, take the supremum over finite restrictions. By the Schur multiplier factorization theorem and trace duality, we have
\[
 \gamma_2(c)=\norm{S_c:\Sone\to\Sone}
 =\sup_{d\ge1}\norm{S_{c\otimes\mathbf J_d}:\Sone\to\Sone};
\]
see \cite[Theorem 3.2 and Corollary 3.3]{PPS89} and \cite{Pis01}. If $c$ is finite and $\gamma_2(c)\le1$, then the same theorem gives a factorization with $\norm{u_i}_{\cH},\norm{v_j}_{\cH}\le1$.

\subsection{Background and main results}

For $1<p<\infty$, Conde-Alonso, Gonz\'alez-P\'erez, Parcet and Tablate proved a H\"ormander--Mikhlin condition for boundedness on $\mathcal S^p$ \cite[Theorem A]{CGPT23}. A classical endpoint example is triangular truncation, which is unbounded on $\Sone$ but bounded from $\Sone$ to $\Sw$ \cite{Mac61,GK70}.

Parcet's ICM 2026 survey \cite[Remark 1.3, p.~733]{Par26} records the weak $(1,1)$ extension of the H\"ormander--Mikhlin--Schur theorem as an open problem. Theorem~\ref{thm:HMSn} and Corollary~\ref{cor:continuousn} below establish its usual integer-order formulation in every Euclidean dimension, with bounds uniform under amplification.

We also characterize weak $(1,1)$ Schur multipliers by approximation of weighted columns in a common finite-dimensional subspace. The endpoint proof is independent of this characterization. It combines block decompositions indexed by finite rooted trees with Cuculescu's projections \cite{Cuc71} and the weak-type martingale method of \cite{Ran02}. Related tree filtrations for Schur multipliers appear in \cite[Section 1.1]{GPPW26}.

\subsection{Characterization of weak $(1,1)$ Schur multipliers}
For finite nonempty $F,G\subset I$, a matrix $c$ on $F\times G$ with $\gamma_2(c)\le1$, and $p\in[0,\infty)^F$ with $\norm p_2=1$, put
\begin{equation}\label{eq:columns}
 z_j=(p_i m_{ij}c_{ij})_{i\in F}\in\ell^2(F)\qquad(j\in G).
\end{equation}
By Lemma~\ref{lem:factor}, every matrix on $F\times G$ of trace norm one has the form $X=(p_i c_{ij}q_j)_{i,j}$, where $\gamma_2(c)\le1$ and $p,q$ are nonnegative unit vectors in $\ell^2(F),\ell^2(G)$. Therefore its image under $S_m$ has columns $q_jz_j$.

For a finite family $z=(z_j)_{j\in G}$, set
\[
 d_k(z)=\inf_{\dim E\le k}\max_{j\in G}\dist(z_j,E),\qquad k\in\mathbb N_0,
\]
where $E\subset\C^F$ is a subspace. This is the Kolmogorov $k$-width of $\{z_j:j\in G\}$; see \cite{Pin85}. Define
\begin{equation}\label{eq:width}
 \fD(m)=\sup_{F,G,c,p}\sup_{k\in\mathbb N_0}\sqrt{k+1}\,d_k(z).
\end{equation}
Here $F,G,c,p$ range over the choices above. We also set $\fD_{\mathrm{amp}}(m)=\sup_{d\ge1}\fD(m\otimes\mathbf J_d)$.

\begin{theorem}\label{thm:char}
For every scalar symbol $m$, the Schur multiplier $S_m$ extends boundedly from $\Sone$ to $\Sw$ if and only if $\fD(m)<\infty$. Moreover,
\begin{equation}\label{eq:comparisons}
 \tfrac12W(m)\le\fD(m)\le2W(m).
\end{equation}
The same inequalities hold for $W_{\mathrm{amp}}$ and $\fD_{\mathrm{amp}}$. Thus $\fD_{\mathrm{amp}}(m)<\infty$ is equivalent to uniform weak $(1,1)$ boundedness under amplification.
\end{theorem}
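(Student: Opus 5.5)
The plan is to reduce everything to the column picture of \eqref{eq:columns}. By Lemma~\ref{lem:factor}, a finitely supported $X$ with $\norm X_1\le1$ can be written, after restriction to finite $F\times G$, as $X=(p_ic_{ij}q_j)$ with $\gamma_2(c)\le1$ and $p,q$ nonnegative unit vectors. Then $S_m(X)=ZD_q$, where $Z$ is the $F\times G$ matrix with columns $z_j$ and $D_q=\mathrm{diag}(q_j)$. Both inequalities in \eqref{eq:comparisons} are finite-dimensional statements about $Z$. By Lemma~\ref{lem:extension}, $W(m)<\infty$ is the extension property, so the equivalence follows from \eqref{eq:comparisons}. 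The amplified version follows by applying the same argument verbatim to $m\otimes\mathbf J_d$ and taking $\sup_d$, since nothing below depends on the index set.

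\emph{Upper bound $W\le2\fD$.} Fix $n\ge1$ and split $n=k+l+1$ with $k,l\ge0$ as equal as possible. Choose $E\subset\C^F$ with $\dim E\le k$ and $\max_j\dist(z_j,E)\le\fD(m)/\sqrt{k+1}$, up to $\varepsilon$. Write $P_E$ for the orthogonal projection onto $E$. Then
\[
\norm{(1-P_E)ZD_q}_2^2=\sum_j q_j^2\dist(z_j,E)^2\le\fD(m)^2/(k+1).
\]
By \eqref{eq:weyl}, and since $\rank(P_EZD_q)\le k$,
\[
s_n(ZD_q)\le s_{l+1}\bigl((1-P_E)ZD_q\bigr)\le\frac{\norm{(1-P_E)ZD_q}_2}{\sqrt{l+1}}\le\frac{\fD(m)}{\sqrt{(k+1)(l+1)}}.
\]
With $k+1,l+1\ge n/2$, the right-hand side is at most $2\fD(m)/n$. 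Taking the supremum over $n$ gives $\norm{S_mX}_{1,\infty}\le2\fD(m)$.

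\emph{Lower bound $\fD\le2W$.} Fix $F,G,c,p$ and $k$, and set $\delta=d_k(z)$. I will choose $q$ supported on few columns and use the weak bound. I pick columns greedily. At each step the span of the already chosen $z_j$ has dimension $\le k$, so by the definition of $d_k$ some column lies at distance $\ge\delta-\varepsilon$ from it. This produces $j_1,\dots,j_{k+1}$ such that each $z_{j_r}$ is at distance $\gtrsim\delta$ from $\mathrm{span}\{z_{j_1},\dots,z_{j_{r-1}}\}$. Take $q_j=(k+1)^{-1/2}$ on these columns and $q_j=0$ elsewhere. Restricting $c$ to these columns keeps $\gamma_2\le1$, so $Y:=(k+1)^{1/2}S_mX$ satisfies $s_r(Y)\le W(m)\sqrt{k+1}/r$.

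By Gram--Schmidt, $Y$ has the form (isometry)$\times$(upper triangular) with diagonal entries $\ge\delta$. Hence $\prod_{r\le k+1}s_r(Y)=|\det|\ge\delta^{k+1}$. Comparing gives
\[
\delta^{k+1}\le \frac{\bigl(W(m)\sqrt{k+1}\bigr)^{k+1}}{(k+1)!},
\]
hence $\delta\le eW(m)/\sqrt{k+1}$ by Stirling.

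The main obstacle is sharpening $e$ to $2$. The determinant argument loses in the small singular values. I would instead apply the product inequality only to the top half, $r\le\lceil(k+1)/2\rceil$. This uses the Weyl-type fact that the product of the $r$ largest singular values dominates $\delta^r$ times a lower bound for the volume of $r$ well-separated chosen columns. Alternatively, I would first choose $2k+1$ columns greedily and then a subfamily via interlacing, balancing the factorial loss to reach the constant $2$. If only the weaker constant survives, the qualitative equivalence in Theorem~\ref{thm:char} is still unaffected.
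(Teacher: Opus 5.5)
Your reduction to the column matrix $Z$ and your proof of $W(m)\le2\fD(m)$ are correct and coincide with the paper's argument: a rank-$k$ perturbation $P_EZD_q$, then the Hilbert--Schmidt count with $(k+1)(l+1)\ge n^2/4$.

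The gap is in $\fD(m)\le2W(m)$. Your greedy selection and determinant comparison are valid, but what they prove is
\[
 \sqrt{k+1}\,d_k(z)\le\frac{k+1}{((k+1)!)^{1/(k+1)}}\,W(m).
\]
The factor on the right exceeds $2$ as soon as $k\ge5$ and tends to $e$. So you obtain $\fD(m)\le eW(m)$. That gives the qualitative equivalences, including the amplified one, but not \eqref{eq:comparisons} as stated.

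Neither suggested repair closes this. Restricting the product inequality to the top $r$ singular values gives $\delta\le W(m)\sqrt{k+1}/(r!)^{1/r}$. This is \emph{worse} for $r<k+1$, because $(r!)^{1/r}$ increases with $r$.

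More fundamentally, the two facts you extract from $Y$ can hold simultaneously with $\delta=W(m)\sqrt{k+1}/((k+1)!)^{1/(k+1)}$. These facts are that the Gram--Schmidt distances are at least $\delta$ and that $s_r(Y)\le W(m)\sqrt{k+1}/r$. By Horn's converse to Weyl's inequalities and Schur triangularization, there is an upper triangular matrix with all diagonal entries equal to this $\delta$ and singular values exactly $W(m)\sqrt{k+1}/r$. So no sharpening of the volume step applied to this one matrix $Y$ can reach the constant $2$.

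The missing idea is a duality step. It turns the weak bound for \emph{all} weightings $q$ simultaneously into one subspace that is good for every column. The paper uses
\[
 \Lambda_\tau(Y)=\sum_n\min\{s_n(Y)^2,\tau^2\}=\min_{0\le P\le I}\{\tau^2\Tr(P)+\Tr(YY^*(I-P))\}\le2\tau\norm Y_{1,\infty}.
\]
It then applies Sion's minimax theorem to $(P,\Gamma)\mapsto\tau^2\Tr(P)+\Tr(\Gamma(I-P))$ on $\{0\le P\le I\}\times\operatorname{conv}\{z_jz_j^*\}$. Since $\Gamma=Z_qZ_q^*=\sum_jq_j^2z_jz_j^*$ exhausts this hull, this yields a single positive contraction $P$ with $\tau^2\Tr(P)+\max_j\ip{(I-P)z_j}{z_j}\le2\tau W(m)$.

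Taking $\tau=L/(k+1)$ with $L>2W(m)$ forces $\Tr(P)<k+1$. If $Q$ projects onto the top $k$ eigenvectors of $P$, then $I-Q\le(I-P)/(1-\Tr(P)/(k+1))$. This gives exactly $\max_j\dist(z_j,\ran Q)^2\le L^2/(k+1)$, and hence the constant $2$.
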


\subsection{Block decompositions indexed by a tree}
Let $\Omega$ be a finite nonempty index set. Starting with $\Omega_o=\Omega$, split each nonterminal block into two nonempty disjoint blocks,
\[
 \Omega_v=\Omega_{v_-}\sqcup\Omega_{v_+}.
\]
These successive partitions are indexed by a finite rooted full binary tree $\mathcal T$: the root is $o$, each internal vertex $v$ has children $v_-,v_+$, and the leaves index the terminal blocks. We write $\mathcal T^\circ$ and $\mathcal L(\mathcal T)$ for the internal vertices and leaves, respectively; see \cite[Section 1.5]{Die17} for related terminology. Put $N_\lambda=\Omega_\lambda$ for a leaf $\lambda$. The terminal blocks may have arbitrary finite cardinalities, and the root may itself be terminal.

On $\cH=\ell^2(\Omega)$, let $P_u$ be the coordinate projection onto $\ell^2(\Omega_u)$. Thus
\[
 P_o=I,\qquad P_v=P_{v_-}+P_{v_+},\qquad P_{v_-}P_{v_+}=0.
\]
Write $u\preceq v$ when $u$ is an ancestor of $v$, equivalently $P_v\le P_u$, and $u\prec v$ when $u\preceq v$ and $u\ne v$. Depth counts the number of splits from the root. Two distinct terminal blocks separate at a unique split; its vertex is their \emph{least common ancestor}. The corresponding entries belong to the corners $P_{v_-}\cB(\cH)P_{v_+}$ and $P_{v_+}\cB(\cH)P_{v_-}$.

For $r\in\mathbb N_0$, let $\mathcal P_r(u)$ index the partition obtained from $\Omega_u$ after $r$ refinements, splitting every nonterminal block at each step and retaining terminal blocks. In particular, $\mathcal P_0(u)=\{u\}$, and
\[
 P_u=\sum_{w\in\mathcal P_r(u)}P_w,\qquad
 P_wP_{w'}=0\quad(w\ne w'),\qquad |\mathcal P_r(u)|\le2^r.
\]

\begin{definition}\label{def:complexity}
A symbol $a$ on $\Omega\times\Omega$ has \emph{descendant complexity at most $r$} if $S_a$ acts by scalar multiplication on each terminal corner $P_\lambda\cB(\cH)P_\lambda$ and on each corner
\[
 P_w\cB(\cH)P_{w'},\qquad P_{w'}\cB(\cH)P_w,
 \quad w\in\mathcal P_r(v_-),\quad w'\in\mathcal P_r(v_+).
\]
Here $\lambda$ ranges over the leaves and $v$ over the internal vertices. Equivalently, $a$ is constant on the corresponding matrix rectangles. The scalar coefficients on different corners, including opposite corners, may differ.
\end{definition}

For $r=0$, each off-diagonal corner at a split has one scalar coefficient. For $r=1$, each such corner is divided into at most four corners. The parameter $r$ counts refinements below each split, independently of the depth of the whole tree.

\begin{theorem}\label{thm:tree}
If $a$ has descendant complexity at most $r$ on a finite rooted full binary tree, then
\begin{equation}\label{eq:FC}
 \Wa(a)\le6\cdot2^{r/2}\maxnorm a.
\end{equation}
The bound is independent of the tree, the dimensions of the terminal blocks, and the amplification. For every $r\in\mathbb N_0$, there is such a symbol with $\maxnorm a=1$ and $W(a)=\Wa(a)=2^{(r+1)/2}$.
\end{theorem}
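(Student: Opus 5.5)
We plan to prove \eqref{eq:FC} by a Cuculescu-type weak-type argument adapted to the binary tree, and then give an explicit example for sharpness. We first reduce. The symbol $a$ is constant on each terminal corner $P_\lambda\cB(\cH)P_\lambda$. That diagonal part is a block-diagonal scalar multiplier, so it is contractive on $\Sone$ up to $\maxnorm a$, and it commutes with amplification. We may therefore assume $a$ vanishes on the terminal corners and lose at most an additive $\maxnorm a$; the constant $6$ is meant to absorb this. The remaining symbol decomposes as
\[
 a=\sum_{v\in\mathcal T^\circ}\sum_{w\in\mathcal P_r(v_-),\,w'\in\mathcal P_r(v_+)}\bigl(a^{+}_{ww'}\,\mathbf 1_{w\times w'}+a^{-}_{w'w}\,\mathbf 1_{w'\times w}\bigr).
\]
For each split $v$ there are at most $4^r$ coefficients. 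Amplification replaces $\Omega$ by $\Omega\times\{1,\dots,d\}$ with the same tree and the same blocks, so every estimate below will be uniform in $d$ automatically. It therefore suffices to bound $W(a)$ for arbitrary finite block sizes.

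For the main estimate, take $X\ge0$ first (a general $X$ is a combination of four positives, or we use $X=YZ$ with Hilbert--Schmidt factors). Set $\lambda>0$ and $X_u=P_uXP_u$. Along the filtration given by the tree depth we apply Cuculescu's construction to the "martingale" $\mathbb E_n X=\sum_{\operatorname{depth}u=n}X_u$. This gives projections $q_n$, commuting with the $P_u$ at depth $n$, decreasing in $n$, with $q_nX_uq_n\le\lambda$ and $\Tr(1-q)\le\norm X_1/\lambda$, where $q=\bigwedge q_n$. Following \cite{Ran02}, we form the Gundy-type decomposition $X=g+b$. The good part is
\[
 g=qXq+\sum_n(q_{n-1}-q_n)\mathbb E_n X(q_{n-1}-q_n),
\]
with $\norm g_1\le\norm X_1$ and $\norm g_\infty\lesssim\lambda$. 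The bad part $b$ consists of terms supported off $q$ on at least one side, so $S_a(b)$ has range or corange in $1-q$ up to a controlled leakage. For the good part we use an $\Stwo$ estimate: $\norm{S_a(g)}_2^2\le K\maxnorm a^2\norm g_2^2\le K\maxnorm a^2\lambda\norm X_1$. Chebyshev then gives $N(\lambda;S_a(g))\le K\maxnorm a^2\norm X_1/\lambda$. Schur multipliers are contractive on $\Stwo$ with constant $\maxnorm a$, so $K=1$ here.

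The key point, and the source of $2^{r/2}$, is the bad part. We must show that $S_a(b)$ differs from something supported on $(1-q)\cH$ on the left or right only by terms of controllable rank. At a split $v$, the multiplier restricted to $P_{v_-}\cB P_{v_+}$ is a sum of at most $2^r\times2^r$ block-constant pieces. Using the Cuculescu projection at depth $\operatorname{depth}(v)+r$, which commutes with all $P_w$ for $w\in\mathcal P_r(v_\pm)$, each piece maps the off-$q$ part into operators with left support in $\sum_w P_w(1-q_{\cdot})P_w$. Rank counting gives a factor of at most $2^r$ times $\Tr(1-q)$. We rebalance by choosing the level as $\lambda'=2^{-r/2}\lambda$ in the Cuculescu step: the bad contribution is then $\lesssim 2^{r/2}\norm X_1/\lambda$, and the good contribution is also $\lesssim 2^{r/2}\norm X_1/\lambda$. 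Combining the two through \eqref{eq:weyl}, adding the pieces, and tracking constants should give $6\cdot2^{r/2}$.

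We expect this step to be the main obstacle, because the bad terms at different depths interact through the common tree, and the $2^r$ rank loss must not accumulate over depth. We plan to handle it by noting that each $(q_{n-1}-q_n)$ enters only at depth $n$. The corresponding rank contribution is then counted once, and the $2^r$ factor appears only through $\mathcal P_r$ below that fixed split.

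For sharpness, we use the tree of depth $r+1$ with singleton leaves, so that $\Omega$ has $2^{r+1}$ points. On this tree $a$ can be an arbitrary off-diagonal symbol of modulus at most one. We take $a$ to be a $\pm1$ symbol of Hadamard/Fourier type on the $2^{r+1}$ points, with the diagonal set to a compatible constant, so that $S_a$ sends a suitable rank-one trace-norm-one matrix to a unitary multiple of a normalized Hadamard matrix: rank one maps to $2^{-(r+1)/2}\cdot$ unitary. That unitary image has $\norm{\cdot}_{1,\infty}=2^{r+1}\cdot2^{-(r+1)/2}=2^{(r+1)/2}$, which gives $W(a)\ge2^{(r+1)/2}$. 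For the matching upper bound, $\norm{S_a}_{\Sone\to\Sone}\le\gamma_2(a)\le\sqrt n=2^{(r+1)/2}$ holds stably under amplification, and we also have $\norm Y_{1,\infty}\le\norm Y_1$. Together these give $W(a)=\Wa(a)=2^{(r+1)/2}$.
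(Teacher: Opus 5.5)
Your overall architecture matches the paper's: Cuculescu projections, an $\Stwo$ bound for the good part, a rank bound for the discarded part, and a balancing choice of threshold. Your sharpness example is also correct and is the paper's own: on the depth-$(r+1)$ tree with singleton leaves, the Hadamard symbol $\begin{pmatrix}1&1\\1&-1\end{pmatrix}^{\otimes(r+1)}$ sends $\mathbf J_n/n$ to $n^{-1/2}$ times a unitary, and $\gamma_2(a)\le\sqrt n$ gives the amplified upper bound.

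The upper bound, however, has a genuine gap exactly at the step you flag as the main obstacle, and that step is the theorem.

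\emph{The discarded part is not low rank after $S_a$.} With your Gundy-type good part $g=qXq+\sum_n(q_{n-1}-q_n)\mathbb E_nX(q_{n-1}-q_n)$, the remainder $X-g$ contains the cross-level terms $(q_{n-1}-q_n)X(q_{m-1}-q_m)$, $n\ne m$. Write $p_u\le P_u$ for the block component of a bad projection on $\Omega_u$. These terms then include $p_uXp_{u'}$ with $u'$ a strict descendant of $u$. Hence they include pieces $P_{u'}p_uXp_{u'}$, which need not vanish and lie entirely inside $\cB(\ell^2(\Omega_{u'}))$. On $\Omega_{u'}\times\Omega_{u'}$ the symbol is a complexity-$r$ symbol on a subtree of arbitrary depth; for $r=0$ it can be a dyadic triangular truncation. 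Neither $P_{u'}p_u$ nor $p_{u'}$ is compatible with its blocks. So $S_a$ can turn such a rank-one piece into an operator of rank comparable to $|\Omega_{u'}|$, and ``range or corange in $1-q$ up to a controlled leakage'' is not available for these terms.

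\emph{The rebalancing paragraph does not repair this.} It is internally inconsistent. If the projection used at a split $v$ commutes with every $P_w$, $w\in\mathcal P_r(v_\pm)$, then $S_a$ preserves left supports and no factor $2^r$ appears. But that projection no longer carries the Cuculescu bound at the level of $v$, so the $\Stwo$ estimate for the good part is gone, and you give no substitute. If instead it is the projection from the level of $v$, left supports inflate.

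\emph{The rank does not count once per projection as claimed.} A bad projection does not ``enter only at depth $n$'': the one living on $\Omega_u$ meets every split $v\prec u$. Counting its rank once per split accumulates over the depth.

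\emph{The threshold has the wrong scaling.} Balancing $2^r\Tr(X)/\kappa$ against $\maxnorm a^2\kappa\Tr(X)/t^2$ requires $\kappa=2^{r/2}t/\maxnorm a$. Your $\lambda'=2^{-r/2}\lambda$ has the exponent with the wrong sign.

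What is missing are the two ingredients of the paper's proof.

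First, run Cuculescu locally and bottom-up: $R_v=Q_{v_-}+Q_{v_+}$, $p_v=\mathbf 1_{(\kappa,\infty)}(R_vXR_v)$, $Q_v=R_v-p_v$, so that $P_w=Q_w+\sum_{w\preceq u}p_u$ with $p_u\le P_u$. Use the tree-adapted good part $X_\flat=\sum_\lambda Q_\lambda XQ_\lambda+\sum_{v\in\mathcal T^\circ}(Q_{v_-}XQ_{v_+}+Q_{v_+}XQ_{v_-})$ instead of $g$. Then $X-X_\flat$ is a sum of the leaf blocks $p_\lambda Xp_\lambda$ and the cut-crossing pieces $p_uX(I-P_u)$ and $\sum_{v\prec u}Q_{s(v,u)}Xp_u$. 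Nothing like $p_uXp_{u'}$ inside a block survives. Telescoping gives $\norm{X_\flat}_2^2=\norm{Q_oXQ_o}_2^2+\sum_{v\in\mathcal T^\circ}\norm{p_vXp_v}_2^2$, hence $\norm{X_\flat}_2^2+2\kappa^2\sum_u\rank(p_u)\le3\kappa\Tr(X)$.

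Second, for $e\le P_u$ one has $\rank S_a(eY(I-P_u))\le2^r\rank(e)$, and the column analogue. The reason is that the rows of $a$ indexed by each $\Omega_w$, $w\in\mathcal P_r(u)$, coincide on all columns outside $\Omega_u$, for every ancestor split at once. The factor $2^r$ comes from $\mathcal P_r(u)$ at the vertex carrying the bad projection, not from $\mathcal P_r(v_\pm)$ at the split. This is what lets $p_u$ be counted only once. Together these give $N(t;S_a(X))\le2^{r+1}\sum_u\rank(p_u)+\maxnorm a^2\norm{X_\flat}_2^2/t^2\le3\cdot2^{r/2}\maxnorm a\Tr(X)/t$ for $X\ge0$.

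Three smaller points:
\begin{enumerate}
\item The leaf terms $p_\lambda Xp_\lambda$ are not bounded by $2\kappa$, so $\norm g_\infty\lesssim\lambda$ fails unless they are moved to the bad part. There, constancy of $a$ on $N_\lambda\times N_\lambda$ handles them, so no separate peeling of the diagonal is needed.
\item Neither peeling off the diagonal nor splitting $X$ into four positives gives the constant $6$, since $\norm{\cdot}_{1,\infty}$ is only a quasi-norm. Four positives with Lemma~\ref{lem:weaksum} give $24\cdot2^{r/2}\maxnorm a$. The paper loses only a factor $2$ by applying the positive estimate to $\begin{pmatrix}|X^*|&X\\X^*&|X|\end{pmatrix}$, using that $a\otimes\mathbf J_2$ has the same complexity.
\item Cuculescu's construction needs the filtration to increase from the leaves to the root, which is the reverse of your depth indexing.
\end{enumerate}
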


\subsection{The HMS endpoint}
Let $n\ge1$, let $s\ge1$ be an integer, and let
$M\in C^s(\RR^{2n}\setminus\{x=y\})$. For a multi-index $\alpha\in\mathbb N_0^n$, write $|\alpha|=\alpha_1+\cdots+\alpha_n$ and $x^\alpha=x_1^{\alpha_1}\cdots x_n^{\alpha_n}$. With $|x-y|$ denoting Euclidean distance, put
\begin{equation}\label{eq:HMSnorm}
 \begin{aligned}
 K_0&=\sup_{x\ne y}|M(x,y)|,\\
 H_{n,s}(M)&=K_0+
 \sum_{1\le|\alpha|\le s}\sup_{x\ne y}|x-y|^{|\alpha|}
 \bigl(|\partial_x^\alpha M(x,y)|+|\partial_y^\alpha M(x,y)|\bigr).
 \end{aligned}
\end{equation}
For bounded $M$ and $X\in\Stwo(L_2(\RR^n))$ with kernel $k_X$, let $S_M(X)$ be the operator with kernel $M(x,y)k_X(x,y)$. If $H_{n,s}(M)<\infty$ and $1<p<\infty$, then the integer-order H\"ormander--Mikhlin--Schur theorem \cite[Theorem A]{CGPT23} gives
\[
 \sup_{d\ge1}\norm{S_M\otimes\mathrm{id}_{M_d}:\mathcal S^p\to\mathcal S^p}
 \le C_n\frac{p^2}{p-1}H_{n,s}(M),
 \qquad s=\lfloor n/2\rfloor+1.
\]

For atomic restrictions we set $M(x,x)=0$, including all blocks of repeated points. Any bounded choice of the values $M(x,x)$ may be added afterwards, since the resulting block-diagonal Schur multiplier has trace-class norm at most $\sup_x|M(x,x)|$, also under amplification.

\begin{theorem}\label{thm:HMSn}
Suppose $s>n/2$ and $H_{n,s}(M)<\infty$. Then there is a constant $C_{n,s}$ such that all finite lists $(x_i)$ and $(y_j)$ in $\RR^n$, with repetitions allowed, satisfy
\begin{equation}\label{eq:endpointn}
 \Wa\bigl((M(x_i,y_j))_{i,j}\bigr)
 \le C_{n,s}K_0^{\,1-n/(2s)}H_{n,s}(M)^{\,n/(2s)}.
\end{equation}
If $K_0=0$, the right-hand side is understood as zero. In particular, the choice $s=\lfloor n/2\rfloor+1$ gives the bound $C_nH_{n,s}(M)$ under the usual integer-order condition.
\end{theorem}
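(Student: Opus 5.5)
We reduce Theorem~\ref{thm:HMSn} to Theorem~\ref{thm:tree} through a decomposition of the symbol indexed by dyadic cubes. First, by homogeneity we may normalize $K_0=1$ and write $H=H_{n,s}(M)\ge1$. Fix the finite lists $(x_i)$, $(y_j)$. Using Remark~\ref{rem:rectangular}, we regard the points as one finite set $\Omega$ (with repetitions kept as distinct indices). We then use the standard adjacent dyadic grids $\mathcal D^1,\dots,\mathcal D^{3^n}$ (or $n+1$ shifted grids). Every pair $x\ne y$ lies, for some grid $t$, in a cube $Q\in\mathcal D^t$ with $\ell(Q)\simeq_n|x-y|$ whose two dyadic children of the binary refinement separate $x$ and $y$. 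Each dyadic split of a cube into $2^n$ children is realized as $n$ successive binary splits, one coordinate at a time. This makes the grid restricted to $\Omega$ a finite rooted full binary tree. Repeated points and points never separated become terminal blocks, and we set $M=0$ on them as prescribed. A smooth partition of unity in the relative scale $|x-y|/\ell(Q)$, chosen so that its derivatives cost at most $C_n$ per order in the homogeneous norm, splits $M$ into $3^n$ pieces $M^t$. Each $M^t$ is supported on pairs whose least common ancestor in tree $t$ has side comparable to $|x-y|$, and each has $H_{n,s}(M^t)\le C_nH$. Since $\Wa$ is subadditive, it suffices to treat one grid.

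For one grid, fix a parameter $r$ and approximate $M$ on each corner $P_{v_-}\cB P_{v_+}$ (with $v$ of side $\ell$) by a symbol of descendant complexity at most $r$. On each product $w\times w'$ with $w\in\mathcal P_r(v_-)$ and $w'\in\mathcal P_r(v_+)$, of side about $2^{-r/n}\ell$, we replace $M$ by its value at a corner point. This gives a symbol $a_r$ of complexity $r$ with $\maxnorm{a_r}\le1$, so Theorem~\ref{thm:tree} yields $\Wa(a_r)\le6\cdot2^{r/2}$. The remainder $M-a_r$ is too large in sup norm to be treated crudely, since it is of size about $H2^{-r/n}$ in first order. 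Instead we use a telescoping series. We set $b_k=a_{k+1}-a_k$, or better the difference of tensor-product polynomial interpolants of degree $s-1$ in each variable, taken on the binary block decomposition from level $k$ to $k+1$. Taylor/interpolation estimates with $|x-y|\simeq\ell$ give $\maxnorm{b_k}\le C_{n,s}H\,2^{-ks/n}$. The difficulty is that a polynomial interpolant is not piecewise constant. We therefore expand it in the monomial basis $(x-c_w)^\alpha(y-c_{w'})^\beta$. Each monomial factor is a product of a row-diagonal and a column-diagonal multiplication by bounded functions. Such factors are contractive Schur multipliers that commute with the rest and cost nothing in $\Wa$, since they have the form $D_1XD_2$ and \eqref{eq:weyl} applies. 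Hence each $b_k$ is a sum of $C_{n,s}$ terms, each a complexity-$(k+1)$ symbol with coefficients bounded by $C H2^{-ks/n}$, composed with diagonal contractions. Theorem~\ref{thm:tree} gives $\Wa(b_k)\le C H2^{k/2-ks/n}$.

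Summing is the main obstacle, because $\Sw$ is only a quasi-Banach space and $\Wa$ is not subadditive with constant one. We handle this by splitting at the level $k_0$ with $2^{k_0 s/n}\simeq H$. The head $a_{k_0}$ has complexity $k_0$ and sup norm at most $1$, so it contributes $2^{k_0/2}\simeq H^{n/(2s)}$. The tail consists of terms with geometrically decaying bounds $H2^{-k(s/n-1/2)}$, since $s>n/2$. For a geometrically decaying sum we use the standard estimate for $\mathcal S^{1,\infty}$ quasi-norms. By \eqref{eq:weyl} with indices $i_k\approx 2^{-\epsilon k}N$, the sum $\sum_k T_k$ has weak norm at most $C_\epsilon\sum_k2^{\epsilon k}\norm{T_k}_{1,\infty}$. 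Taking $\epsilon$ less than half of $s/n-1/2$, the tail contributes $C_{n,s}H2^{-k_0(s/n-1/2)}\simeq H^{n/(2s)}$. Undoing the normalization gives $K_0^{1-n/(2s)}H^{n/(2s)}$. The uniformity under amplification is inherited from Theorem~\ref{thm:tree} and from the diagonal factors, which commute with $\otimes\mathrm{id}_{M_d}$. The delicate points to verify are two: the interpolation error bound on near-diagonal corners, where $\ell(w)$ is comparable to $\ell$ only after the grid choice, and the fact that the $2^{k/2}$ growth from Theorem~\ref{thm:tree} is beaten exactly by $s>n/2$.
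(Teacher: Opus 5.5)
Your proposal has a genuine gap at the step where you expand the interpolation increments in the monomials $(x-c_w)^\alpha(y-c_{w'})^\beta$ and treat each monomial factor as a diagonal multiplication $X\mapsto D_1XD_2$.

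\textbf{Why the diagonal reduction fails.} The cell $w$ attached to a row index $i$ is the cell $k$ binary steps below the child of the least common ancestor $v$ of $i$ and $j$. So $w$, its center $c_w$ and the normalizing scale all depend on $v=v(i,j)$, hence on the column $j$. The factor $(i,j)\mapsto\bigl((x_i-c_{w(i,v(i,j))})/\ell(v(i,j))\bigr)^\alpha$ is therefore not a function of $i$ alone. It is itself a tree-dependent Schur multiplier, not of the form $D_1XD_2$. To compose it with the weak-type bound of Theorem~\ref{thm:tree}, you would need it bounded on $\Sone$ or on $\Sw$, and this fails in general: already a sign symbol depending only on the depth of $v(i,j)$ is unbounded on $\Sone$ as the depth of a dyadic tree grows.

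The obvious alternatives do not work either. Uncentered monomials $x_i^\alpha$ are genuinely diagonal, but then the coefficients are no longer bounded by $CH2^{-ks/n}$ uniformly in the number of scales the tree spans. Piecewise-constant increments $a_{k+1}-a_k$ are only of size $H2^{-k/n}$, which beats the $2^{k/2}$ loss only when $n=1$. So Theorem~\ref{thm:tree}, which handles only symbols constant on the refined corners, does not apply to your $b_k$. The same problem arises for the transition from your piecewise-constant head to the interpolants. This is exactly the step where $s>n/2$ must enter.

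\textbf{The missing idea: cut rank instead of complexity.} In the proof of Theorem~\ref{thm:tree}, complexity is used only through the rank bounds of Lemma~\ref{lem:profiles}; the Hilbert--Schmidt part uses only $\maxnorm a$. Suppose $a$ is constant on terminal rectangles and, for every vertex $u$, its restrictions to $\Omega_u\times(\Omega\setminus\Omega_u)$ and to the transposed set have scalar rank at most $L$. Write $a_{ij}=\sum_{\nu\le L}f_\nu(i)g_\nu(j)$ on that cut. Then $S_a(eY(I-P_u))=\sum_\nu A_\nu eY(I-P_u)B_\nu$, with $A_\nu,B_\nu$ diagonal for that fixed cut, has rank at most $L\rank(e)$. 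The same Cuculescu argument then gives $\Wa(a)\le6\sqrt L\maxnorm a$; this is Proposition~\ref{prop:cutrank}.

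Your increments do have small cut rank. Take $i\in\Omega_w$ with $w\in\mathcal P_{k+1}(u)$ and $j\notin\Omega_u$. The least common ancestor is a fixed strict ancestor of $u$, and the row interpolation cells at levels $k$ and $k+1$ are ancestors of $w$. Hence $i\mapsto b_k(i,j)$ on $\Omega_w$ is the restriction of a polynomial in $\mathcal Q_{s-1}(\RR^n)$, and the cut rank is at most $s^n2^{k+1}$ (compare Lemma~\ref{lem:nincrements}). This gives $\Wa(b_k)\le C_{n,s}2^{k/2}H2^{-ks/n}$. Your split at $k_0$ and your $2^{\epsilon k}$-weighted summation then finish the proof; that summation is a valid alternative to the paper's Lemma~\ref{lem:weaksum}.

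A minor point: $\Wa$ is only quasi-subadditive, so combining the finitely many grids costs a factor $C_n$ rather than nothing.
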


\begin{corollary}\label{cor:continuousn}
Under the hypotheses of Theorem~\ref{thm:HMSn},
\[
 \sup_{d\ge1}\norm{S_M\otimes\mathrm{id}_{M_d}:
 \Sone(L_2(\RR^n;\C^d))\to\Sw(L_2(\RR^n;\C^d))}
 \le C_{n,s}K_0^{\,1-n/(2s)}H_{n,s}(M)^{\,n/(2s)}.
\]
\end{corollary}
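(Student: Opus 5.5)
The plan is to deduce Corollary~\ref{cor:continuousn} from Theorem~\ref{thm:HMSn} by discretization and a density/limiting argument. The continuous operator $S_M$ is only defined through Hilbert--Schmidt kernels, so it suffices to prove
\[
 \norm{(S_M\otimes\mathrm{id}_{M_d})(X)}_{1,\infty}\le A\norm X_1,\qquad A=C_{n,s}K_0^{\,1-n/(2s)}H_{n,s}(M)^{\,n/(2s)},
\]
for $X$ in a dense subclass of $\Sone(L_2(\RR^n;\C^d))$, and then extend by continuity. The extension step uses that $\Sw$ is a quasi-Banach space with $\norm{\cdot}_{1,\infty}$ equivalent to a norm. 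Alternatively, one can avoid the constant loss by using Fatou-type lower semicontinuity of $N(t;\cdot)$ under strong (or $\Stwo$) convergence. For $X\in\Sone\cap\Stwo$ we have $S_M(X)\in\Stwo$, and $S_M$ is $\Stwo$-bounded with norm $K_0$. So if $X_k\to X$ in $\Sone$ and in $\Stwo$, then $S_M(X_k)\to S_M(X)$ in $\Stwo$, and $s_n(S_M X)\le\liminf s_n(S_M X_k)$ by \eqref{eq:approxnumbers}. This gives the bound with the same constant.

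The dense class is finite-rank operators $X=\sum_{l} \phi_l\otimes\overline{\psi_l}$ with $\phi_l,\psi_l$ simple functions (with values in $\C^d$) on finite unions of small cubes. For such $X$ I approximate $M$ by its values at atomic points. Fix a mesh $\delta$ and partition the relevant compact set into cubes $Q_i$ of side $\delta$ with chosen centers $x_i$. Let $M_\delta(x,y)=M(x_i,x_j)$ for $x\in Q_i$, $y\in Q_j$, with $i\ne j$, and $M_\delta=0$ on the diagonal blocks $Q_i\times Q_i$.

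The operator $S_{M_\delta}(X)$ then factors as follows. Let $V:\ell^2(\{i\})\otimes L_2(Q_0)\to L_2$ be the natural unitary identification of functions on cubes with sequences of functions on a reference cube. Conjugating by $V$ turns $S_{M_\delta}\otimes\mathrm{id}_{M_d}$ into the Schur multiplier $(M(x_i,x_j))_{i,j}$ amplified by $\mathrm{id}$ on $\cB(L_2(Q_0)\otimes\C^d)$. Since $X$ has finite rank, it can be compressed to a finite-dimensional subspace of $L_2(Q_0)\otimes\C^d$, so only a finite amplification $M_{d'}$ is needed. Theorem~\ref{thm:HMSn}, applied to the list $(x_i)$ on both sides with $W_{\mathrm{amp}}$, gives
\[
 \norm{S_{M_\delta}(X)}_{1,\infty}\le A\norm X_1 .
\]
Here $\Wa$ is crucial, because $L_2(Q_0)$ is infinite-dimensional, and the bound is uniform in $\delta$ and $d$.

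It remains to let $\delta\to0$, and this is where the main obstacle lies, because $M$ is only bounded and $C^s$ off the diagonal. Since $M_\delta\to M\cdot\mathbf 1_{x\ne y}$ pointwise off the diagonal (by continuity there), dominated convergence with $|M_\delta|\le K_0$ and $k_X\in L^2$ gives $S_{M_\delta}(X)\to S_M(X)$ in $\Stwo$. The diagonal $\{x=y\}$ has measure zero in $\RR^{2n}$, so the zero convention on diagonal blocks is harmless in the limit. The lower semicontinuity remark above then yields the estimate for $X$ in the dense class. Finally, density in $\Sone\cap\Stwo$ (in both norms simultaneously) together with the same semicontinuity argument extends the bound to all of $\Sone$. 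The supremum over $d$ follows since all constants are independent of $d$.
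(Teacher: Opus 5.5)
Your proof is correct and is essentially the paper's argument: you freeze $M$ on a product cube partition, recognize $S_{M_\delta}\otimes\mathrm{id}_{M_d}$ as an amplified atomic Schur multiplier controlled through $\Wa$ by Theorem~\ref{thm:HMSn}, and pass to the limit by dominated convergence in $\Stwo$ and continuity of singular values. The paper skips your preliminary density reduction by using globally defined finite-range maps $\pi_m$ together with Lemma~\ref{lem:extension}, which handles every $X\in\Sone$ at once. One aside in your proposal is false but unused: $\norm{\cdot}_{1,\infty}$ is not equivalent to any norm, because $\Sw$ is not normable, so rely only on the semicontinuity route that you actually carry out.
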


\paragraph{\bf Organization of the paper.}
Section~\ref{sec:prelim} contains preliminary lemmas. Section~\ref{sec:char} proves the characterization, and Section~\ref{sec:tree} proves the tree estimate. Sections~\ref{sec:grids} and~\ref{sec:trees} give the localization and endpoint argument in one dimension. Section~\ref{sec:productgrids} constructs the smooth localization on product grids. Finally, Section~\ref{sec:hdHMS} combines polynomial interpolation with the tree estimate to prove Theorem~\ref{thm:HMSn} and Corollary~\ref{cor:continuousn} in every dimension.

\section{Preliminaries}\label{sec:prelim}

\begin{lemma}[Factorization]\label{lem:factor}
Let $F,G$ be finite nonempty index sets. Every matrix $X$ on $F\times G$ with $\norm X_1=1$ can be written as
\[
 X=(p_i c_{ij}q_j)_{i\in F,j\in G},
 \qquad p_i,q_j\ge0,\quad \norm p_2=\norm q_2=1,\quad\gamma_2(c)\le1.
\]
Conversely, every matrix of this form has trace norm at most one.
\end{lemma}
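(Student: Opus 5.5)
We prove both directions using the singular value decomposition, and we realize the factor $c$ through Gram matrices of unit vectors.

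\emph{Existence of the factorization.} Take the singular value decomposition $X=\sum_k s_k\,\xi_k\eta_k^*$ with orthonormal $\xi_k\in\C^F$, $\eta_k\in\C^G$, $s_k\ge0$ and $\sum_k s_k=1$. Then
\[
 x_{ij}=\sum_k s_k\,\xi_k(i)\overline{\eta_k(j)}=\ip{u_i'}{v_j'}_{\ell^2},
\]
where $u_i'=(s_k^{1/2}\xi_k(i))_k$ and $v_j'=(s_k^{1/2}\eta_k(j))_k$. We set $p_i=\norm{u_i'}$ and $q_j=\norm{v_j'}$. Orthonormality of the $\xi_k$ gives
\[
 \sum_i p_i^2=\sum_k s_k\sum_i|\xi_k(i)|^2=\sum_k s_k=1,
\]
and the same computation with the $\eta_k$ gives $\norm q_2=1$. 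Define $u_i=u_i'/p_i$ when $p_i>0$, and let $u_i$ be an arbitrary unit vector when $p_i=0$; define $v_j$ from $v_j'$ and $q_j$ in the same way. Put $c_{ij}=\ip{u_i}{v_j}$. All $u_i$ and $v_j$ then have norm at most one, so $\gamma_2(c)\le1$ by \eqref{gamma2}. To check $x_{ij}=p_ic_{ij}q_j$ we use $u_i'=p_iu_i$ and $v_j'=q_jv_j$. If $p_i=0$ then $u_i'=0$, so $x_{ij}=0$, and both sides vanish. The case $q_j=0$ is handled the same way.

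\emph{Converse.} Suppose $X=(p_ic_{ij}q_j)$ with $\gamma_2(c)\le1$. For any $\varepsilon>0$, choose a factorization $c_{ij}=\ip{u_i}{v_j}$ with $\norm{u_i},\norm{v_j}\le1+\varepsilon$. Fix an orthonormal basis $(e_k)$ of the span of the $u_i$ and $v_j$; this span is finite-dimensional. Expanding $c_{ij}$ in this basis gives
\[
 X=\sum_k a_kb_k^*,\qquad a_k=(p_i\ip{u_i}{e_k})_i,\qquad b_k=(q_j\ip{v_j}{e_k})_j.
\]
By the triangle inequality for the trace norm and Cauchy--Schwarz,
\[
 \norm X_1\le\sum_k\norm{a_k}\norm{b_k}
 \le\Bigl(\sum_k\norm{a_k}^2\Bigr)^{1/2}\Bigl(\sum_k\norm{b_k}^2\Bigr)^{1/2}.
\]
By Parseval, $\sum_k\norm{a_k}^2=\sum_ip_i^2\norm{u_i}^2\le(1+\varepsilon)^2$, and likewise $\sum_k\norm{b_k}^2\le(1+\varepsilon)^2$. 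Hence $\norm X_1\le(1+\varepsilon)^2$, and letting $\varepsilon\to0$ gives $\norm X_1\le1$.

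Both directions are elementary, so we expect no genuine obstacle. The only points needing care are the zero weights $p_i=0$ or $q_j=0$ in the first direction and the approximate infimum in the definition of $\gamma_2$ in the second. The infimum is actually attained for finite $c$, as noted after \eqref{gamma2}, so the $\varepsilon$ in the converse could also be dropped.
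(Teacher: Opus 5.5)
Your proof is correct and follows essentially the paper's route. The existence direction is the same SVD splitting $X=AB^*$ with $\norm A_2=\norm B_2=1$, row norms as the weights and normalized rows as the Gram factors. Your converse is the Hölder bound $\norm{AB^*}_1\le\norm A_2\norm B_2$ written out as a rank-one expansion plus Cauchy--Schwarz. The differences from the paper (arbitrary unit vectors at zero weights, and an $\varepsilon$-approximate factorization instead of the attained one) are cosmetic.
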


\begin{proof}
Take a reduced singular value decomposition $X=UTV^*$ and set $A=UT^{1/2}$, $B=VT^{1/2}$. Then $X=AB^*$ and $\norm A_2^2=\norm B_2^2=\Tr(T)=1$. If $\alpha_i,\beta_j$ are the rows of $A,B$, set $p_i=\norm{\alpha_i}$ and $q_j=\norm{\beta_j}$. Normalize each nonzero row to a unit vector, and replace each zero row by the zero vector. Their pairwise inner products therefore define $c$ with $\gamma_2(c)\le1$ and give the required factorization, with $\sum_i p_i^2=\sum_j q_j^2=1$.

Conversely, choose $c_{ij}=\ip{u_i}{v_j}$ with $\norm{u_i},\norm{v_j}\le1$. Form matrices $A,B$ with rows $p_i u_i$ and $q_j v_j$. Then $(p_i c_{ij}q_j)_{i,j}=AB^*$ and
\[
 \norm{AB^*}_1\le\norm A_2\norm B_2
 \le\Bigl(\sum_i p_i^2\Bigr)^{1/2}\Bigl(\sum_j q_j^2\Bigr)^{1/2}=1.
\]
\end{proof}

For a finite matrix $X$ and $\tau>0$, put
\begin{equation}\label{eq:Phi}
\Lambda_{\tau}(X)=\sum_{n\ge1}\min\{s_n(X)^2,\tau^2\}.
\end{equation}

\begin{lemma}\label{lem:Phi}
For every finite matrix $X$, with $P$ acting on its codomain, we have
\begin{align}
\Lambda_{\tau}(X)&=\int_0^{\tau^2}N(\sqrt t;X)\,dt=\min_{0\le P\le I}\{\tau^2\Tr(P)+\Tr(XX^*(I-P))\},\label{eq:Phi-var}\\
\Lambda_{\tau}(X)&\le2\tau\norm X_{1,\infty}.\label{eq:Phi-weak}
\end{align}
The minimum in \eqref{eq:Phi-var} is attained at $P=\mathbf1_{(\tau^2,\infty)}(XX^\ast)$.
\end{lemma}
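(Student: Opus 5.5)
The plan is to prove the three assertions of Lemma~\ref{lem:Phi} in turn, working throughout with the eigenvalues $\mu_n=s_n(X)^2$ of the positive finite matrix $XX^*$. First the layer-cake identity, then the variational formula, then the weak-type bound, which follows from the first.

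\emph{Layer-cake identity.} For each $n$ we have
\[
\min\{\mu_n,\tau^2\}=\int_0^{\tau^2}\mathbf 1_{\{\mu_n>t\}}\,dt .
\]
Summing over $n$ and exchanging the finite sum with the integral gives
\[
\Lambda_\tau(X)=\int_0^{\tau^2}\#\{n:s_n(X)^2>t\}\,dt .
\]
The integrand equals $N(\sqrt t;X)$, which is the first equality in \eqref{eq:Phi-var}.

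\emph{Variational formula.} Write $A=XX^*$ with spectral decomposition $A=\sum_n\mu_n e_n\otimes e_n$, and let $0\le P\le I$.
\begin{itemize}[leftmargin=*]
\item Since $A$ is diagonal in the basis $(e_n)$, we have $\Tr(A(I-P))=\sum_n\mu_n(1-\pi_n)$, where $\pi_n=\ip{Pe_n}{e_n}\in[0,1]$. Likewise $\Tr(P)=\sum_n\pi_n$.
\item The objective therefore becomes
\[
\tau^2\Tr(P)+\Tr(XX^*(I-P))=\sum_n\bigl(\tau^2\pi_n+\mu_n(1-\pi_n)\bigr).
\]
\item Each summand is an affine function of $\pi_n\in[0,1]$. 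Its minimum is attained at an endpoint and equals $\min\{\tau^2,\mu_n\}$.
\end{itemize}
Hence the objective is always at least $\Lambda_\tau(X)$. The choice $\pi_n=1$ exactly when $\mu_n>\tau^2$ is realized by the projection $P=\mathbf 1_{(\tau^2,\infty)}(XX^*)$, which attains the value $\Lambda_\tau(X)$. This proves the second equality in \eqref{eq:Phi-var} and the statement about the minimizer.

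\emph{Weak-type bound.} From the definition of the weak norm, $N(u;X)\le \norm X_{1,\infty}/u$ for every $u>0$. Substituting into the integral formula gives
\[
\Lambda_\tau(X)\le\norm X_{1,\infty}\int_0^{\tau^2}t^{-1/2}\,dt=2\tau\norm X_{1,\infty},
\]
which is \eqref{eq:Phi-weak}.

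The only point needing care is the variational step. There $P$ need not commute with $XX^*$, so the trace must be reduced to the diagonal entries $\pi_n$ in the eigenbasis of $XX^*$. This reduction uses only that $XX^*$ is diagonal in that basis, not $P$, and it is the one nonroutine observation in the proof.
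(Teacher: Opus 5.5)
Your proposal is correct and follows the paper's proof essentially step by step. You sum the layer-cake identity over the singular values, reduce the variational expression to the diagonal entries $\langle Pe_n,e_n\rangle\in[0,1]$ in an eigenbasis of $XX^*$, and integrate $N(\sqrt t;X)\le\norm X_{1,\infty}/\sqrt t$ over $(0,\tau^2)$ to obtain the weak-type bound.
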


\begin{proof}
The integral formula follows by summing $\min\{s^2,\tau^2\}=\int_0^{\tau^2}\mathbf1_{\{t<s^2\}}\,dt$ over the singular values. In an eigenbasis of $XX^*$, the variational expression is $\sum_n[s_n^2+(\tau^2-s_n^2)P_{nn}]$. Since $0\le P_{nn}\le1$, it is minimized by the spectral projection $P=\mathbf1_{(\tau^2,\infty)}(XX^\ast)$. 

Finally, we integrate $$N(\sqrt t;X)\le\norm X_{1,\infty}/\sqrt t$$ over $(0,\tau^2)$. Together with \eqref{eq:Phi-var}, this gives \eqref{eq:Phi-weak}.
\end{proof}

The singular-value inequalities \eqref{eq:approxnumbers}--\eqref{eq:weyl} imply the following standard estimates
$$N(t;X)\le\rank(R)+N(t;X-R),\qquad N(t;X)\le t^{-2}\norm X_2^2,$$
and, for finitely many compact operators $X_j$ and numbers $t_j>0$,
\begin{equation}\label{eq:countsum}
 N\Bigl(\sum_jt_j;\sum_jX_j\Bigr)\le\sum_jN(t_j;X_j).
\end{equation}
See \cite[Chapter II]{GK69} and \cite[Chapter 11]{BS87}.

\begin{lemma}\label{lem:weaksum}
For $X_1,\ldots,X_n\in\Sw$,
\begin{equation}\label{eq:weaksum}
 \bigl\|\sum_{j=1}^nX_j\bigr\|_{1,\infty}
 \le\Bigl(\sum_{j=1}^n\norm{X_j}_{1,\infty}^{1/2}\Bigr)^2.
\end{equation}
\end{lemma}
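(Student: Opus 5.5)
We use the counting inequality \eqref{eq:countsum} with a weighted splitting of the level $t$. Put $a_j=\norm{X_j}_{1,\infty}^{1/2}$ and $A=\sum_j a_j$. If some $a_j=0$, then $X_j=0$ and can be discarded, so we may assume every $a_j>0$. If all $X_j$ vanish, there is nothing to prove.

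Fix $t>0$ and choose the weights $t_j=t\,a_j/A$, so that $\sum_j t_j=t$. Then \eqref{eq:countsum} gives
\[
 N\Bigl(t;\sum_jX_j\Bigr)\le\sum_jN(t_j;X_j)\le\sum_j\frac{\norm{X_j}_{1,\infty}}{t_j}
 =\sum_j\frac{a_j^2A}{t\,a_j}=\frac{A^2}{t}.
\]
Here we used $N(s;X)\le\norm X_{1,\infty}/s$, which is immediate from $\norm X_{1,\infty}=\sup_{s>0}sN(s;X)$. Multiplying by $t$ and taking the supremum over $t>0$ yields $\norm{\sum_jX_j}_{1,\infty}\le A^2$, which is \eqref{eq:weaksum}.

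The only step that needs justification is \eqref{eq:countsum} for compact operators in $\Sw$. This follows by induction from the Weyl-type inequality $s_{i+j-1}(X+Y)\le s_i(X)+s_j(Y)$ in \eqref{eq:weyl}. If $N(t_1;X)=a$ and $N(t_2;Y)=b$, take $i=a+1$ and $j=b+1$. Then $s_{a+b+1}(X+Y)\le t_1+t_2$, so $N(t_1+t_2;X+Y)\le a+b$. The choice of weights is just the equality case of Cauchy--Schwarz, namely minimizing $\sum_j b_j/\theta_j$ subject to $\sum_j\theta_j=1$.
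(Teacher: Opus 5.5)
Your proof is correct and is essentially the paper's argument: discard the zero summands, split the level as $t_j=ta_j/A$, and apply \eqref{eq:countsum} together with $N(s;X)\le\norm X_{1,\infty}/s$. Your derivation of \eqref{eq:countsum} from the Weyl inequality $s_{i+j-1}(X+Y)\le s_i(X)+s_j(Y)$, by induction on the number of summands, is a correct supplement to the result the paper cites from the literature.
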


\begin{proof}
The assertion is immediate if all $X_j$ vanish. Otherwise, discard zero summands and put $a_j=\norm{X_j}_{1,\infty}^{1/2}$ and $a=\sum_ja_j$. Applying \eqref{eq:countsum} with $t_j=ta_j/a$ then yields
$$N\Bigl(t;\sum_jX_j\Bigr)\le\sum_j\frac{a_j^2}{t_j}=\frac{a^2}{t},\qquad t>0.$$
The assertion follows immediately.
\end{proof}

\begin{lemma}\label{lem:extension}
If $W(m)<\infty$, then $\maxnorm m\le W(m)$ and the Schur multiplier $S_m:\Sone\to\Sw$ is bounded with $$\|S_m:\mathcal{S}^1\to \mathcal{S}^{1,\infty}\|\leq W(m).$$
\end{lemma}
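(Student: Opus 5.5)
We prove the two assertions separately: the entry bound comes from testing $S_m$ on single matrix units, and the extension from density plus a completeness argument in the quasi-Banach space $\Sw$.

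\emph{The entry bound $\maxnorm m\le W(m)$.} Fix $i,j\in I$ and take $X=e_{ij}$, the matrix unit. It is finitely supported with $\norm X_1=1$. Its image $S_m(X)=m_{ij}e_{ij}$ is rank one with $s_1=|m_{ij}|$, so $\norm{S_m(X)}_{1,\infty}=|m_{ij}|$. Hence $|m_{ij}|\le W(m)$, and taking the supremum over $i,j$ gives the claim.

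\emph{The extension.} Finitely supported matrices are dense in $\Sone(\ell^2(I))$. To see this, let $Q_F$ be the coordinate projection onto $\ell^2(F)$ for finite $F\uparrow I$. Then $Q_FXQ_F\to X$ in $\Sone$, because $Q_F\to I$ strongly and trace-class operators are limits of finite-rank ones, on which the convergence is clear; the uniform bound $\norm{Q_FXQ_F}_1\le\norm X_1$ passes it to all of $\Sone$. On the dense subspace, $S_m$ is linear with $\norm{S_m(X)}_{1,\infty}\le W(m)\norm X_1$.

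The space $\Sw$ is complete with respect to the quasi-norm $\norm\cdot_{1,\infty}$. Lemma~\ref{lem:weaksum} gives the quasi-triangle inequality $\norm{X+Y}_{1,\infty}\le 2(\norm X_{1,\infty}+\norm Y_{1,\infty})$, and completeness is standard (alternatively, via \eqref{eq:approxnumbers}, $\sup_n n\,s_n$ is lower semicontinuous under operator-norm limits).

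Now let $X\in\Sone$ and choose finitely supported $X_k\to X$ in $\Sone$. Linearity gives $\norm{S_m(X_k)-S_m(X_l)}_{1,\infty}\le W(m)\norm{X_k-X_l}_1$, so $(S_m(X_k))$ is Cauchy in $\Sw$ and converges to some $Y$. The limit $Y$ is independent of the approximating sequence, by the same estimate applied to interlaced sequences. Moreover, $Y$ agrees entrywise with $m\circ X$, since $\norm{\cdot}_\infty\le\norm\cdot_{1,\infty}$ forces entrywise convergence, and $|m_{ij}|\le W(m)$ gives $m_{ij}(X_k)_{ij}\to m_{ij}X_{ij}$. So the extension is the Schur multiplier itself.

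\emph{The norm bound, which is the delicate step.} The difficulty is that the quasi-norm is not continuous in the naive triangle sense, so we cannot simply pass the inequality to the limit. Instead we use lower semicontinuity: for each $n$, $s_n(S_m(X_k))\to s_n(Y)$, since by \eqref{eq:approxnumbers} the map $s_n$ is $1$-Lipschitz in operator norm and $S_m(X_k)\to Y$ in operator norm. Therefore
\[
 n\,s_n(Y)=\lim_k n\,s_n(S_m(X_k))\le \liminf_k W(m)\norm{X_k}_1=W(m)\norm X_1,
\]
and taking the supremum over $n$ gives $\norm{S_m:\Sone\to\Sw}\le W(m)$.
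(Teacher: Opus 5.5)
Your proof is correct and follows the same route as the paper. You test on matrix units, approximate $X$ by the finitely supported compressions $Q_FXQ_F$, and pass $n\,s_n(\cdot)\le W(m)\norm{\cdot}_1$ to the limit using the operator-norm continuity of each $s_n$.

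The only difference is how you identify the limit. You construct it through completeness of $\Sw$ and then match it entrywise with $m\circ X$. The paper instead observes that $\maxnorm m\le W(m)$ already makes $S_m$ bounded on $\Stwo$, so $S_m(X_n)\to S_m(X)$ in $\Stwo$ directly, which makes the completeness step unnecessary.
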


\begin{proof}
Testing a matrix unit gives $|m_{ij}|\le W(m)$. Thus $S_m$ defines a bounded map on $\Stwo$. Fix $X\in\Sone$. Then
\begin{equation}\label{eq:12}
\norm{S_m(X)}_2\le\maxnorm m\,\norm X_2\le W(m)\norm X_1.
\end{equation}
Let $P_n$ be finite-rank coordinate projections such that $P_n\uparrow I$, and put $X_n=P_nXP_n$. Then $X_n\to X$ in $\mathcal{S}^1$ and $\norm{X_n}_1\le\norm X_1$. By \eqref{eq:12}, we have $S_m(X_n)\to S_m(X)$ in $\mathcal{S}^2$. For every $k$, the map $Y\to s_k(Y)$ is continuous in operator norm. Therefore \eqref{eq:W} gives
 $$s_k(S_m(X))=\lim_{n\to\infty} s_k(S_m(X_n))\le\limsup_{n\to\infty}\frac{W(m)}{k}\norm{P_nXP_n}_1\le\frac{W(m)}{k}\norm X_1,\qquad k\geq1.$$
The proof is complete.
\end{proof}

\section{Characterization of weak $(1,1)$ Schur multipliers}\label{sec:char}

Fix a scalar symbol $m=(m_{ij})_{i,j\in I}$ and choose $F,G,c,p$ and the columns $z_j$ as in \eqref{eq:columns}. For $q\in[0,\infty)^G$ with $\norm q_2=1$, put
\begin{equation}\label{eq:Zq}
 Z_q=(p_i m_{ij}c_{ij}q_j)_{i\in F,j\in G},\qquad
 \Gamma_q=Z_qZ_q^*=\sum_jq_j^2z_jz_j^*.
\end{equation}
Lemma~\ref{lem:factor} gives $\norm{(p_i c_{ij}q_j)_{i,j}}_1\le1$, so $\norm{Z_q}_{1,\infty}\le W(m)$. The matrices $\Gamma_q$ range over the compact convex set
\[
 \mathcal C=\operatorname{conv}\{z_jz_j^*:j\in G\}.
\]

\begin{lemma}\label{lem:minimax}
For every $\tau>0$, there is a positive contraction $P\in\cB(\C^F)$ such that
\begin{equation}\label{eq:common-contraction}
 \tau^2\Tr(P)+\max_j\ip{(I-P)z_j}{z_j}\le2\tau W(m).
\end{equation}
\end{lemma}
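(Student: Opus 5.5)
Fix $\tau>0$. The plan is a minimax argument on the bilinear (affine in each variable) function
\[
 \Phi(P,\Gamma)=\tau^2\Tr(P)+\Tr\bigl(\Gamma(I-P)\bigr),
\]
where $P$ ranges over the compact convex set $\mathcal P=\{P\in\cB(\C^F):0\le P\le I\}$ and $\Gamma$ ranges over the compact convex set $\mathcal C=\operatorname{conv}\{z_jz_j^*:j\in G\}$. Since $\Phi$ is affine and continuous in each variable separately, and both sets are compact convex subsets of finite-dimensional spaces, von Neumann's minimax theorem (or Sion's) gives
\[
 \min_{P\in\mathcal P}\max_{\Gamma\in\mathcal C}\Phi(P,\Gamma)=\max_{\Gamma\in\mathcal C}\min_{P\in\mathcal P}\Phi(P,\Gamma).
\]

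I bound the right-hand side first. Every $\Gamma\in\mathcal C$ is a convex combination $\sum_j\theta_jz_jz_j^*$ with $\theta_j\ge0$ and $\sum_j\theta_j=1$. Put $q_j=\theta_j^{1/2}$, so $q\in[0,\infty)^G$ has $\norm q_2=1$ and $\Gamma=\Gamma_q=Z_qZ_q^*$ by \eqref{eq:Zq}. By Lemma~\ref{lem:Phi}, specifically \eqref{eq:Phi-var} applied to $X=Z_q$,
\[
 \min_{P\in\mathcal P}\Phi(P,\Gamma_q)=\Lambda_\tau(Z_q).
\]
Then \eqref{eq:Phi-weak} gives $\Lambda_\tau(Z_q)\le2\tau\norm{Z_q}_{1,\infty}$. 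As noted before the statement, $\norm{Z_q}_{1,\infty}\le W(m)$. Hence the max–min is at most $2\tau W(m)$.

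For the left-hand side, take a minimizing $P$; it exists because $\mathcal P$ is compact and $P\mapsto\max_{\Gamma}\Phi(P,\Gamma)$ is continuous. Since $\Phi(P,\cdot)$ is affine, its maximum over $\mathcal C$ is attained at an extreme point $z_jz_j^*$. At such a point,
\[
 \Tr\bigl(z_jz_j^*(I-P)\bigr)=\ip{(I-P)z_j}{z_j}.
\]
This yields \eqref{eq:common-contraction}.

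The only delicate point is justifying the minimax exchange. This is routine here because everything is finite-dimensional and compact. If $W(m)=\infty$ there is nothing to prove, so we may assume $W(m)<\infty$.
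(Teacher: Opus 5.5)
Your proposal is correct and follows the paper's proof essentially verbatim: the same minimax exchange (Sion) on $\tau^2\Tr(P)+\Tr(\Gamma(I-P))$ over $\{0\le P\le I\}\times\mathcal C$, and the same identification of the inner minimum with $\Lambda_\tau(Z_q)\le2\tau\norm{Z_q}_{1,\infty}\le2\tau W(m)$ via Lemma~\ref{lem:Phi}. Your explicit parametrization $q_j=\theta_j^{1/2}$, showing that every $\Gamma\in\mathcal C$ is some $\Gamma_q$, just makes explicit a step the paper leaves implicit.
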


\begin{proof}
For $0\le P\le I$,
\[
 \max_j\ip{(I-P)z_j}{z_j}
 =\max_{\Gamma\in\mathcal C}\Tr(\Gamma(I-P)).
\]
The function $\tau^2\Tr(P)+\Tr(\Gamma(I-P))$ is continuous and affine in $P$ and $\Gamma$, and both domains are compact and convex. By Sion's minimax theorem \cite{Sio58} and Lemma~\ref{lem:Phi},
\[
 \begin{aligned}
 &\min_{0\le P\le I}\bigl\{\tau^2\Tr(P)+\max_j\ip{(I-P)z_j}{z_j}\bigr\}\\
 &\quad=\max_{\Gamma\in\mathcal C}\min_{0\le P\le I}
 \bigl\{\tau^2\Tr(P)+\Tr(\Gamma(I-P))\bigr\}\\
 &\quad=\max_{q\ge0,\,\norm q_2=1}\Lambda_\tau(Z_q)
 \le2\tau W(m).
 \end{aligned}
\]
Since the minimum is attained, the assertion follows.
\end{proof}

\begin{proof}[Proof of Theorem~\ref{thm:char}]
\emph{Necessity.} Assume $W(m)<\infty$ and fix $L>2W(m)$. Let $F,G,c,p$ and $z_j$ be as above, put $n=|F|$, and fix $k\in\mathbb N_0$. If $k\ge n$, then $d_k(z)=0$. Otherwise, set $\tau=L/(k+1)$ and choose $P$ as in Lemma~\ref{lem:minimax}. Write
\[
 t=\Tr(P),\qquad e=\max_j\ip{(I-P)z_j}{z_j}.
\]
Then $t\le2W(m)/\tau<k+1$ and $e\le L\tau-\tau^2t$. Let $\mu_1\ge\cdots\ge\mu_n$ be the eigenvalues of $P$, and let $Q$ project onto its first $k$ eigenvectors, with $Q=0$ if $k=0$. Since $\mu_{k+1}\le t/(k+1)<1$,
\[
 I-Q\le\frac{I-P}{1-t/(k+1)}.
\]
Thus, for $E=\ran Q$,
\[
 \max_j\dist(z_j,E)^2
 \le\frac{L\tau-\tau^2t}{1-t/(k+1)}
 =\frac{L^2}{k+1}.
\]
Taking the suprema in \eqref{eq:width} and letting $L\downarrow2W(m)$ gives $\fD(m)\le2W(m)$.

\emph{Sufficiency.} Assume $\fD(m)<\infty$ and fix $A>\fD(m)$. Apply Lemma~\ref{lem:factor} to a finitely supported $X$ with $\norm X_1=1$, and choose $F,G,c,p,q$ accordingly. Then $Y=S_m(X)=Z_q$. For $n\ge1$, put $k=\lfloor n/2\rfloor$ and choose a subspace $E$ of dimension at most $k$ with $\max_j\dist(z_j,E)\le A/\sqrt{k+1}$. If $Q$ is its orthogonal projection and $R=QY$, then
\[
 \rank(R)\le k,\qquad
 \norm{Y-R}_2^2=\sum_jq_j^2\norm{(I-Q)z_j}^2\le\frac{A^2}{k+1}.
\]
Therefore, by the rank perturbation inequality and the Hilbert--Schmidt bound, we have
\[
 s_n(Y)\le s_{n-k}(Y-R)
 \le\frac{A}{\sqrt{(k+1)(n-k)}}
 =\frac{A}{\sqrt{\lfloor(n+1)^2/4\rfloor}}\le\frac{2A}{n}.
\]
Taking the supremum over $n,X$ and letting $A\downarrow\fD(m)$ gives $W(m)\le2\fD(m)$.

Finally, Lemma~\ref{lem:extension} gives the bounded extension. We then apply both inequalities to $m\otimes\mathbf J_d$ and take the supremum over $d$ to obtain the amplified statement.
\end{proof}

\section{The estimate for block decompositions}\label{sec:tree}

Fix a finite nonempty index set $\Omega$ and a block decomposition indexed by a finite rooted full binary tree $\mathcal T$. On $\cH=\ell^2(\Omega)$, let $P_u$ be the coordinate projection onto $\ell^2(\Omega_u)$. For $r\in\mathbb N_0$, let $a=(a_{ij})_{i,j\in\Omega}$ be a scalar Schur multiplier symbol with descendant complexity at most $r$ in the sense of Definition~\ref{def:complexity}; thus $S_a(X)=(a_{ij}x_{ij})_{i,j\in\Omega}$. Put $A=\maxnorm a$ and assume $A>0$. Amplification replaces each terminal space $\ell^2(N_\lambda)$ by $\ell^2(N_\lambda)\otimes\C^d$, preserving the block decomposition, the complexity and $A$. It therefore suffices to estimate $W(a)$ uniformly over the dimensions of the terminal spaces.

Starting with the terminal partition $\pi_0=\{N_\lambda:\lambda\in\mathcal L(\mathcal T)\}$, successively merge two child blocks into their parent to obtain partitions $\pi_0,\ldots,\pi_N$, where $N=|\mathcal T^\circ|$ and $\pi_N=\{\Omega\}$. Put
\[
 \mathcal M_k=\bigoplus_{B\in\pi_k}\cB(\ell^2(B)),\qquad
 E_k(X)=\sum_{B\in\pi_k}P_BXP_B,
\]
where $P_B$ is the coordinate projection onto $\ell^2(B)$. The algebras $\mathcal M_k$ form an increasing filtration ending at $\cB(\cH)$, and $E_k$ is the trace-preserving conditional expectation onto $\mathcal M_k$. Thus $(E_k(X))_{k=0}^N$ is a martingale. If the $k$-th merge joins the children of $v$, its martingale difference is
\[
 d_kX:=(E_k-E_{k-1})(X)
 =P_{v_-}XP_{v_+}+P_{v_+}XP_{v_-},\qquad 1\le k\le N.
\]
The martingale decomposition $X=E_0(X)+\sum_{k=1}^N d_kX$ therefore takes the form
\begin{equation}\label{eq:treeblocks}
 X=\sum_{\lambda\in\mathcal L(\mathcal T)}P_\lambda XP_\lambda
 +\sum_{v\in\mathcal T^\circ}
 (P_{v_-}XP_{v_+}+P_{v_+}XP_{v_-}).
\end{equation}
These matrix corners are mutually orthogonal for the Hilbert--Schmidt inner product: each matrix entry belongs either to one terminal diagonal block or to the unique split at which its two terminal blocks separate. A related use of tree partitions and martingale differences for Schur multipliers appears in \cite[Section 1.1]{GPPW26}.

\subsection{Rank estimates}

\begin{lemma}\label{lem:profiles}
Let $u\in\mathcal T$, let $e\le P_u$ be an orthogonal projection, and let $Y\in\cB(\cH)$. Then
\begin{equation}\label{eq:rankinflation}
 \begin{aligned}
 \rank\bigl(S_a(eY(I-P_u))\bigr)&\le2^r\rank(e),\\
 \rank\bigl(S_a((I-P_u)Ye)\bigr)&\le2^r\rank(e).
 \end{aligned}
\end{equation}
If $u$ is a leaf, then
\[
 \rank\bigl(S_a(eY(I-P_u)+eYe)\bigr)\le\rank(e).
\]
\end{lemma}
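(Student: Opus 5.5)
The plan is to decompose $I-P_u$ into coordinate projections of the form $P_w$ on whose corners against $P_u$ the symbol $a$ is constant in blocks, and then bound the rank piece by piece. The complement of $\Omega_u$ in $\Omega$ is the disjoint union of the siblings $\Omega_{s(v)}$ of the vertices $v$ on the path from the root to $u$, excluding the root. For each ancestor $v\prec u$, let $v'$ be the child of $v$ with $v'\preceq u$ and $\bar v'$ the other child. Then
\[
 I-P_u=\sum_{v\prec u}P_{\bar v'}.
\]

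For fixed $v$, consider $\mathcal P_r(v')$. Exactly one $w\in\mathcal P_r(v')$ satisfies $w\preceq u$ or $u\preceq w$. Indeed, refinement produces partitions and $\Omega_u\subset\Omega_{v'}$, and if $u$ lies at depth at most $r$ below $v'$, some element $w\preceq u$ contains it; if $u$ is deeper, some $w\preceq u$ contains $\Omega_u$. The case where $u$ is terminal with depth less than $r$ below $v'$ also gives $w=u$, since terminal blocks are retained.

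Case $w\preceq u$: then $P_u\le P_w$, and by Definition~\ref{def:complexity} $a$ is constant on each rectangle $\Omega_w\times\Omega_{w'}$ with $w'\in\mathcal P_r(\bar v')$. Hence $S_a(eYP_{\bar v'})=\sum_{w'}a_{w,w'}\,eYP_{w'}$, which is $e$ times something, so its rank is at most $\rank(e)$.

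Case $u\prec w$: this is impossible for a single $w$, because $w\in\mathcal P_r(v')$ with $u\prec w$ would mean $\Omega_w\subsetneq\Omega_u$. The partition of $\Omega_{v'}$ into $\mathcal P_r(v')$ elements meeting $\Omega_u$ then consists of several blocks $w\in\mathcal P_r(v')$ with $u\preceq w$ and $\Omega_w\subset\Omega_u$; there are at most $2^r$ of them. Writing $e=\sum_w P_w e$ does not preserve projections, so instead I write $S_a(eYP_{\bar v'})=\sum_{w,w'}a_{w,w'}P_weYP_{w'}$. Each row-block $P_w\,(eY P_{\bar v'}\text{-modified})$ lies in the range of $P_we$, whose rank is at most $\rank(e)$.

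This crude approach gives a factor depending on the number of ancestors, which is too weak. The key improvement is to organize by ranges. The row space of the output lies in $\sum_w \ran(P_w e)$ summed over the relevant $w$, taken jointly over all $v$. For ancestors $v$ with $u$ at depth greater than $r$ below $v'$, case 1 applies, and the output has range in $\ran(e)$ for all such $v$ simultaneously. For the at most $r$ nearest ancestors, the blocks $w$ are the descendants of $u$ at level $r-\mathrm{depth}(u\text{ below }v')$, which are nested refinements of the finest one, namely the descendants of $u$ at level $r-1$ (at most $2^{r-1}\le2^r$ blocks $D$). All ranges therefore lie in $\sum_{D}\ran(P_De)$, of dimension at most $2^{r}\rank(e)$, and this subspace also contains $\ran(e)$. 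This gives the first inequality. The second follows by symmetry, either through the adjoint, which turns $a$ into $a^T$ with the same complexity, or by the same argument on columns.

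For a leaf $u$, no descendants exist, so only case 1 occurs, with $w=u$. Moreover $a$ is scalar on $P_uBP_u$, so $S_a(eYe)=a_{uu}eYe$. Every term has range in $\ran(e)$, and the rank is at most $\rank(e)$.

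The main obstacle is the bookkeeping in this last step: checking carefully that all range contributions across the different ancestors lie in the single common subspace $\sum_D\ran(P_De)$.
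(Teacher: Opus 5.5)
Your argument is correct and is essentially the paper's. The paper partitions $\Omega_u$ by $\mathcal P_r(u)$ from the start and notes that each $\Omega_w$ ($w\in\mathcal P_r(u)$) lies in a single block of $\mathcal P_r(v_\pm)$ for every split $v$ separating $\Omega_u$ from an outside column. It then writes $S_a(eY(I-P_u))=\sum_{w}P_weY(I-P_u)B_w$ with diagonal $B_w$, which is your common range $\sum_D\ran(P_De)$ reached without the ancestor-by-ancestor detour.

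Two slips in the write-up need fixing:
\begin{itemize}
\item The claim that exactly one $w\in\mathcal P_r(v')$ is comparable with $u$ fails when $u$ is nonterminal and fewer than $r$ levels below $v'$, as you yourself acknowledge in the next paragraph.
\item The finest partition is $\mathcal P_r(u)$, coming from the parent of $u$ where $v'=u$, not level $r-1$.
\end{itemize}
Neither affects the conclusion: since $|\mathcal P_r(u)|\le2^r$ and $P_we=\sum_{D\in\mathcal P_r(u),\,w\preceq D}P_De$, both the bound $2^r\rank(e)$ and your ``main obstacle'' go through at once.
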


\begin{proof}
Fix $w\in\mathcal P_r(u)$ and a column index $j\notin\Omega_u$. Let $v$ be the unique split separating $\Omega_u$ from the terminal block containing $j$. On the side containing $\Omega_u$, the block $\Omega_w$ lies inside one block of $\mathcal P_r(v_\pm)$: refinement starts at $u$, at or below $v_\pm$, and continues for $r$ steps unless a terminal block is reached. The index $j$ lies in one block on the other side. Definition~\ref{def:complexity} therefore makes $a_{ij}$ independent of $i\in\Omega_w$. Thus the rows indexed by $\Omega_w$ agree outside $\Omega_u$. The same argument applies to columns.

Choose $i_w\in\Omega_w$, and let $B_w$ be the diagonal operator with entries $a_{i_wj}$ for $j\notin\Omega_u$ and zero otherwise. The preceding observation gives
\[
 S_a(eY(I-P_u))
 =\sum_{w\in\mathcal P_r(u)}P_weY(I-P_u)B_w.
\]
Since there are at most $2^r$ terms, each of rank at most $\rank(e)$, the first inequality follows. The column version is proved in the same way. If $u$ is a leaf, the rows indexed by $N_u$ agree outside $N_u$, and $a$ has a constant value $\zeta_u$ on $N_u\times N_u$. Hence
\[
 S_a(eY(I-P_u)+eYe)
 =eY\bigl((I-P_u)B_u+\zeta_ue\bigr),
\]
whose range lies in $e\cH$.
\end{proof}

\subsection{Cuculescu's projections}
For the next three lemmas, fix $X\ge0$ and $\kappa>0$. The martingale $(E_k(X))_{k=0}^N$ is positive, so we may apply Cuculescu's construction \cite{Cuc71}, in the form of \cite[Proposition 2.4]{Ran02}, one block at a time. Starting at the terminal blocks, define
\begin{equation}\label{eq:cuculescu}
 \begin{gathered}
 R_v=\begin{cases}P_v,&v\in\mathcal L(\mathcal T),\\Q_{v_-}+Q_{v_+},&v\in\mathcal T^\circ,\end{cases}
 \qquad X_v=R_vXR_v,\\
 p_v=\mathbf1_{(\kappa,\infty)}(X_v),\qquad Q_v=R_v-p_v.
 \end{gathered}
\end{equation}
The spectral projections are taken in $\cB(R_v\cH)$.

\begin{lemma}\label{lem:cuculescu}
The projections in \eqref{eq:cuculescu} satisfy:
\begin{enumerate}[label=\textup{(\roman*)},leftmargin=*]
\item For every $w$,
\[
 P_w=Q_w+\sum_{u:w\preceq u}p_u,
\]
and this sum is orthogonal. In particular, $Q_o$ and all $p_u$ are mutually orthogonal and sum to $I$.
\item $Q_vXQ_v\le\kappa Q_v$, $Q_vXp_v=0$, and $p_vXp_v\ge\kappa p_v$. Consequently,
\begin{equation}\label{eq:rankbudget}
 \kappa\sum_{u\in\mathcal T}\rank(p_u)\le\Tr(X).
\end{equation}
\item For $v\in\mathcal T^\circ$, $X_v\le2\kappa R_v$, and hence $\norm{p_vXp_v}_\infty\le2\kappa$.
\end{enumerate}
\end{lemma}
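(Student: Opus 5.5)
The plan is to argue by induction up the tree, starting from the leaves, exactly in the order in which the projections in \eqref{eq:cuculescu} are defined. First I will record the structural facts. For every $v$, $R_v\le P_v$: for a leaf this is equality, and for an internal vertex it follows inductively from $Q_{v_\pm}\le R_{v_\pm}\le P_{v_\pm}$. Since $p_v$ is a spectral projection of $X_v$ taken inside $\cB(R_v\cH)$, we have $p_v\le R_v$, $Q_v=R_v-p_v$ is a projection, and $p_vQ_v=0$. For internal $v$, the children lie under the orthogonal projections $P_{v_-}$ and $P_{v_+}$, so $Q_{v_-}\perp Q_{v_+}$ and $R_v$ is a projection.

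\emph{Part (i).} I will prove the identity $P_w=Q_w+\sum_{w\preceq u}p_u$ by induction on the height of $w$. Reading $u$ as a descendant of $w$, the sum runs over the subtree of $w$. For a leaf, $P_w=R_w=Q_w+p_w$. For internal $w$,
\[
P_w=P_{w_-}+P_{w_+}=Q_{w_-}+Q_{w_+}+\sum_{u\text{ below }w_\pm}p_u=R_w+\sum p_u=Q_w+p_w+\sum p_u.
\]
Orthogonality also follows inductively. Terms coming from the two children sit under $P_{w_-}$ and $P_{w_+}$ respectively. Within each child, the terms are orthogonal by the induction hypothesis. Finally, $Q_w\perp p_w$, and both are $\le R_w$, which is orthogonal to every descendant $p_u$ because $R_w=Q_{w_-}+Q_{w_+}$. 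Taking $w=o$ gives the second sentence.

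\emph{Part (ii).} These are spectral facts about $X_v$ on $R_v\cH$. Since $Q_v$ and $p_v$ are complementary spectral projections of $X_v=R_vXR_v$, and both lie under $R_v$:
\begin{itemize}
\item $Q_vXQ_v=Q_vX_vQ_v\le\kappa Q_v$;
\item $Q_vXp_v=Q_vX_vp_v=0$;
\item $p_vXp_v\ge\kappa p_v$.
\end{itemize}
For \eqref{eq:rankbudget}, note that $\kappa\rank(p_u)\le\Tr(p_uXp_u)$. Summing over the mutually orthogonal $p_u$ from (i), and using $X\ge0$ together with $\sum_u p_u\le I$, gives $\sum_u\Tr(p_uXp_u)\le\Tr(X)$.

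\emph{Part (iii).} This is the only real step. Write $R_v=Q_-+Q_+$ with $Q_\pm=Q_{v_\pm}$. By (ii) applied at the children, $Q_\pm XQ_\pm\le\kappa Q_\pm$. For the positive operator $X$, the operator-convexity inequality
\[
(Q_-+Q_+)X(Q_-+Q_+)\le 2\,(Q_-XQ_-+Q_+XQ_+)
\]
holds, since $2(Q_-XQ_-+Q_+XQ_+)-R_vXR_v=(Q_--Q_+)X(Q_--Q_+)\ge0$. Hence $X_v\le2\kappa(Q_-+Q_+)=2\kappa R_v$. Then $p_vXp_v=p_vX_vp_v\le2\kappa p_v$, and therefore $\norm{p_vXp_v}_\infty\le2\kappa$.

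The expected obstacle is bookkeeping rather than analysis: one must ensure the ancestor/descendant direction in (i) is read correctly, and that the spectral projections defined within $\cB(R_v\cH)$ commute with $X_v$ so that the identities in (ii) hold exactly.
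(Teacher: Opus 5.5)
Your proposal is correct and follows the paper's proof essentially step for step. Part (i) comes from iterating $R_v=Q_v+p_v$ up the tree, part (ii) from the spectral properties of $X_v$ on $R_v\cH$, and part (iii) from the identity $X_v+(Q_{v_-}-Q_{v_+})X(Q_{v_-}-Q_{v_+})=2(Q_{v_-}XQ_{v_-}+Q_{v_+}XQ_{v_+})$ together with the child bounds; the only difference is that you verify (ii) and the rank budget directly, where the paper cites Randrianantoanina's form of Cuculescu's construction.
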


\begin{proof}
Part (i) follows by iterating $R_v=Q_v+p_v$ through the tree. Part (ii) consists of the standard Cuculescu estimates; see \cite[Proposition 2.4]{Ran02}. For (iii), write $S_v=Q_{v_-}-Q_{v_+}$. Positivity of $S_vXS_v$ and the child bounds in (ii) give
\[
 X_v\le X_v+S_vXS_v
 =2(Q_{v_-}XQ_{v_-}+Q_{v_+}XQ_{v_+})\le2\kappa R_v.
\]
\end{proof}

For leaves, however, the bound $\norm{p_\lambda Xp_\lambda}_\infty\le2\kappa$ may fail. We therefore omit these terms from $X_\flat$ below.

\subsection{A Hilbert--Schmidt estimate}
Define
\begin{equation}\label{eq:retained}
 X_\flat=\sum_{\lambda\in\mathcal L(\mathcal T)}Q_\lambda XQ_\lambda
 +\sum_{v\in\mathcal T^\circ}
 (Q_{v_-}XQ_{v_+}+Q_{v_+}XQ_{v_-}).
\end{equation}

\begin{lemma}\label{lem:retained}
We have
\[
 \begin{gathered}
 \norm{S_a(X_\flat)}_2^2\le A^2\norm{X_\flat}_2^2,\\
 \norm{X_\flat}_2^2+2\kappa^2\sum_{u\in\mathcal T}\rank(p_u)
 \le3\kappa\Tr(X).
 \end{gathered}
\]
\end{lemma}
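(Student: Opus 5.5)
The first inequality needs no structure: $S_a$ multiplies entries by scalars of modulus at most $A$, so it is contractive up to the factor $A$ for the Hilbert--Schmidt norm. All the work is in the second inequality. The plan is to compute $\norm{X_\flat}_2^2$ exactly by telescoping through the tree, and then to compare the result eigenvalue by eigenvalue with traces of compressions of $X$.

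\emph{Step 1: orthogonality.} Since $Q_\lambda\le P_\lambda$ and $Q_{v_\pm}\le P_{v_\pm}$, each summand of \eqref{eq:retained} is supported in one of the mutually disjoint corners of \eqref{eq:treeblocks}. Hence
\[
\norm{X_\flat}_2^2=\sum_{\lambda}\norm{Q_\lambda XQ_\lambda}_2^2+\sum_{v\in\mathcal T^\circ}\norm{Q_{v_-}XQ_{v_+}+Q_{v_+}XQ_{v_-}}_2^2 .
\]

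\emph{Step 2: telescoping.} Fix $v\in\mathcal T^\circ$. Since $R_v=Q_{v_-}+Q_{v_+}$ is an orthogonal sum, we have
\[
\norm{Q_{v_-}XQ_{v_+}+Q_{v_+}XQ_{v_-}}_2^2=\norm{X_v}_2^2-\norm{Q_{v_-}XQ_{v_-}}_2^2-\norm{Q_{v_+}XQ_{v_+}}_2^2 .
\]
Next use $R_v=Q_v+p_v$ together with $Q_vXp_v=0$ from Lemma~\ref{lem:cuculescu}(ii). Because $X$ is selfadjoint, this also gives $p_vXQ_v=0$, and therefore
\[
\norm{X_v}_2^2=\norm{Q_vXQ_v}_2^2+\norm{p_vXp_v}_2^2 .
\]
For a leaf, $X_\lambda=P_\lambda XP_\lambda$ and $P_\lambda=Q_\lambda+p_\lambda$. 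Summing over all vertices, each term $\norm{Q_uXQ_u}_2^2$ with $u\ne o$ appears once with a plus sign (at $u$) and once with a minus sign (at its parent), so it cancels. What remains is
\[
\norm{X_\flat}_2^2=\norm{Q_oXQ_o}_2^2+\sum_{v\in\mathcal T^\circ}\norm{p_vXp_v}_2^2 .
\]
The leaf terms $p_\lambda Xp_\lambda$ never enter this expression. This is exactly why they were omitted from $X_\flat$.

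\emph{Step 3: eigenvalue comparison.} I will prove the three bounds
\[
\norm{Q_oXQ_o}_2^2\le3\kappa\Tr(Q_oXQ_o),
\]
\[
\norm{p_vXp_v}_2^2+2\kappa^2\rank(p_v)\le3\kappa\Tr(p_vXp_v)\quad(v\in\mathcal T^\circ),
\]
\[
2\kappa^2\rank(p_\lambda)\le3\kappa\Tr(p_\lambda Xp_\lambda)\quad(\lambda\in\mathcal L(\mathcal T)).
\]
\begin{enumerate}[label=\textup{(\alph*)},leftmargin=*]
\item For $Q_o$: Lemma~\ref{lem:cuculescu}(ii) gives $0\le Q_oXQ_o\le\kappa Q_o$, so each eigenvalue $\mu$ satisfies $\mu^2\le\kappa\mu$.
\item For internal $v$: since $p_v$ is a spectral projection of $X_v$, we have $p_vXp_v=p_vX_vp_v$. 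By (ii) and (iii) its spectrum on $p_v\cH$ lies in $[\kappa,2\kappa]$. The scalar inequality $\mu^2+2\kappa^2\le3\kappa\mu$ is equivalent to $(\mu-\kappa)(\mu-2\kappa)\le0$, which holds on this interval. Summing over the $\rank(p_v)$ eigenvalues gives the bound.
\item For leaves: there is no upper bound on the spectrum, but only $\mu\ge\kappa$ is needed. It gives $2\kappa^2\le2\kappa\mu\le3\kappa\mu$ for each eigenvalue.
\end{enumerate}

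\emph{Step 4: summation.} Add the three bounds. By Lemma~\ref{lem:cuculescu}(i), $Q_o$ and the $p_u$ form an orthogonal partition of the identity. Hence
\[
\Tr(Q_oXQ_o)+\sum_u\Tr(p_uXp_u)=\Tr(X),
\]
which yields the stated estimate.

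The only delicate step is the telescoping identity in Step 2, specifically the bookkeeping that $p_vXQ_v=0$ kills the cross terms inside $X_v$. Once that identity is in place, the rest is the scalar quadratic inequality on $[\kappa,2\kappa]$, which is where Lemma~\ref{lem:cuculescu}(iii) is used.
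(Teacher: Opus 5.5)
Your proposal is correct and follows the paper's proof essentially step by step. It uses the orthogonality of the corners, then the telescoping identity $\norm{X_\flat}_2^2=\norm{Q_oXQ_o}_2^2+\sum_{v\in\mathcal T^\circ}\norm{p_vXp_v}_2^2$ obtained from $X_v=Q_vXQ_v+p_vXp_v$, and finally the scalar inequality $\mu^2+2\kappa^2\le3\kappa\mu$ on $(\kappa,2\kappa]$ together with the trivial bounds for $Q_o$ and the leaf projections. The only cosmetic difference is that you bound each leaf term by $3\kappa\Tr(p_\lambda Xp_\lambda)$ directly, whereas the paper uses the sharper $2\kappa\Tr(p_\lambda Xp_\lambda)$; this changes nothing.
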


\begin{proof}
The summands of \eqref{eq:retained} lie in the mutually orthogonal matrix corners of \eqref{eq:treeblocks}, so their squared Hilbert--Schmidt norms add. At an internal vertex, the block decomposition relative to $Q_{v_-}+Q_{v_+}$ gives
\[
 \begin{aligned}
 &\norm{Q_{v_-}XQ_{v_+}}_2^2+\norm{Q_{v_+}XQ_{v_-}}_2^2\\
 &\quad=\norm{Q_vXQ_v}_2^2+\norm{p_vXp_v}_2^2
 -\norm{Q_{v_-}XQ_{v_-}}_2^2-\norm{Q_{v_+}XQ_{v_+}}_2^2.
 \end{aligned}
\]
Here $X_v=Q_vXQ_v+p_vXp_v$ by spectral calculus. If we sum over the internal vertices and add the leaf terms, then all terms involving $Q_u$ with $u\ne o$ cancel. Thus
\begin{equation}\label{eq:treeenergy}
 \norm{X_\flat}_2^2
 =\norm{Q_oXQ_o}_2^2+\sum_{v\in\mathcal T^\circ}\norm{p_vXp_v}_2^2.
\end{equation}
For an internal vertex, the spectrum of $p_vXp_v$ on $p_v\cH$ lies in $(\kappa,2\kappa]$. Since $\lambda^2+2\kappa^2\le3\kappa\lambda$ on this interval,
\[
 \norm{p_vXp_v}_2^2+2\kappa^2\rank(p_v)
 \le3\kappa\Tr(p_vXp_v).
\]
For a leaf, $2\kappa^2\rank(p_\lambda)\le2\kappa\Tr(p_\lambda Xp_\lambda)$, while $\norm{Q_oXQ_o}_2^2\le\kappa\Tr(Q_oXQ_o)$. Adding these inequalities and using the orthogonal decomposition $I=Q_o+\sum_up_u$ proves the second assertion. The first follows from entrywise multiplication.
\end{proof}

\subsection{The rank of $S_a(X-X_\flat)$}
For a strict ancestor $v\prec u$, let $s(v,u)$ denote the child of $v$ whose block is disjoint from $\Omega_u$. Set
\[
 U_u=p_uX(I-P_u)+\mathbf1_{\{u\in\mathcal L(\mathcal T)\}}p_uXp_u,
 \qquad V_u=\sum_{v\prec u}Q_{s(v,u)}Xp_u.
\]

\begin{lemma}\label{lem:discarded}
We have $X-X_\flat=\sum_{u\in\mathcal T}(U_u+V_u)$ and
\begin{equation}\label{eq:badrank}
 \rank\bigl(S_a(X-X_\flat)\bigr)
 \le2^{r+1}\sum_{u\in\mathcal T}\rank(p_u)
 \le\frac{2^{r+1}}\kappa\Tr(X).
\end{equation}
\end{lemma}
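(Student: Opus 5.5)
The plan is to prove the algebraic identity first and then read off the rank bound from Lemma~\ref{lem:profiles}.

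\emph{Step 1: the identity.} By Lemma~\ref{lem:cuculescu}(i), $I=Q_o+\sum_{u}p_u$ is an orthogonal decomposition. For each leaf $\lambda$ we have $P_\lambda=Q_\lambda+p_\lambda$, and for each internal $v$ we have $P_{v_\pm}=Q_{v_\pm}+\sum_{v_\pm\preceq u}p_u$. Expand every corner in \eqref{eq:treeblocks} with these identities and subtract $X_\flat$. What remains in $X-X_\flat$ are terms in which at least one side is some $p_u$:
\begin{itemize}[leftmargin=*]
\item from a leaf $\lambda$, the terms $p_\lambda XQ_\lambda+Q_\lambda Xp_\lambda+p_\lambda Xp_\lambda$;
\item from a split $v$, the terms $p_uXP_{v_+}$ with $u\succeq v_-$, the terms $Q_{v_-}Xp_{u}$ with $u\succeq v_+$, and their mirror images.
\end{itemize}
I then group these by the projection $p_u$. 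Left factors $p_u$ are grouped together with right factors $P_{s(v,u)}$ over all $v\prec u$; since $\sum_{v\prec u}P_{s(v,u)}=I-P_u$, this produces $p_uX(I-P_u)$. For a leaf, the extra term $p_\lambda X(Q_\lambda+p_\lambda)$ contains the piece $p_\lambda Xp_\lambda$ recorded in $U_u$. Right factors $p_u$ paired with left $Q_{s(v,u)}$ produce $V_u$. The bookkeeping must avoid double counting terms with $p$ on both sides. A term $p_uXp_{u'}$ with $u,u'$ on opposite sides of a split appears once, inside $p_uX(I-P_u)$. So $V_u$ must carry only the $Q$ factors, and the leaf diagonal $Q_\lambda Xp_\lambda$ must be accounted for as well. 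I expect the convention to be that $Q_\lambda Xp_\lambda$ is absorbed once one notes that $Q_\lambda$ and $p_\lambda$ live inside $P_\lambda$. This needs checking, and possibly a reassignment: swap which side carries $P$ versus $Q$ in the grouping so that every term is covered exactly once. This verification is the main obstacle, and I would carry it out by checking each matrix corner $P_wXP_{w'}$ separately.

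\emph{Step 2: rank bounds.}
\begin{itemize}[leftmargin=*]
\item For internal $u$, the first inequality of Lemma~\ref{lem:profiles} with $e=p_u\le P_u$ gives $\rank S_a(U_u)\le2^r\rank(p_u)$.
\item For a leaf, the last inequality of that lemma gives $\rank S_a(U_u)\le\rank(p_u)\le 2^r\rank(p_u)$.
\item We have $V_u=(\sum_{v\prec u}Q_{s(v,u)})Xp_u=(I-P_u)Y p_u$ with $Y=(\sum_{v\prec u}Q_{s(v,u)})X$. The column version in Lemma~\ref{lem:profiles} then gives $\rank S_a(V_u)\le2^r\rank(p_u)$.
\end{itemize}
If the leaf bookkeeping places $Q_\lambda Xp_\lambda$ into $V_\lambda$, then $a$ is constant on $N_\lambda\times N_\lambda$ and the range argument still gives rank at most $\rank(p_\lambda)$. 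Summing over $u$ with subadditivity of rank yields the first inequality in \eqref{eq:badrank}, and \eqref{eq:rankbudget} gives the second.
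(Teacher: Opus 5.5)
Your route is the paper's. On each split corner you use the asymmetric expansion $P_{v_-}XP_{v_+}-Q_{v_-}XQ_{v_+}=(P_{v_-}-Q_{v_-})XP_{v_+}+Q_{v_-}X(P_{v_+}-Q_{v_+})$ and regroup by $p_u$ via $\sum_{v\prec u}P_{s(v,u)}=I-P_u$. Then Lemma~\ref{lem:profiles} handles each piece: the row version for $U_u$, the column version for $V_u=(I-P_u)V_up_u$, and the leaf version for $U_\lambda$. Finally \eqref{eq:rankbudget} closes the estimate. Step~2 is fine as written.

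The gap is in Step~1 at the leaves. You leave it open and suggest that $Q_\lambda Xp_\lambda$ is ``absorbed'' because $Q_\lambda,p_\lambda\le P_\lambda$, or that some reassignment will fix the count. Neither argument works. With $U_\lambda,V_\lambda$ as defined in the statement, $V_\lambda=\sum_{v\prec\lambda}Q_{s(v,\lambda)}Xp_\lambda$ has left support under $I-P_\lambda$. Likewise, $U_\lambda=p_\lambda X(I-P_\lambda)+p_\lambda Xp_\lambda$ has no $p_\lambda XQ_\lambda$ component. So if the mixed leaf terms were nonzero, the stated identity would be false, and reassigning them would change the lemma rather than prove it.

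The missing observation is that these mixed terms vanish. At a leaf, $R_\lambda=P_\lambda$ and $X_\lambda=P_\lambda XP_\lambda$. The projection $p_\lambda=\mathbf 1_{(\kappa,\infty)}(X_\lambda)$ is a spectral projection of $X_\lambda$ in $\cB(P_\lambda\cH)$, so it commutes with $X_\lambda$. Since $Q_\lambda=P_\lambda-p_\lambda$, this gives
$Q_\lambda Xp_\lambda=Q_\lambda X_\lambda p_\lambda=Q_\lambda p_\lambda X_\lambda=0$,
and similarly $p_\lambda XQ_\lambda=0$. This is the relation $Q_vXp_v=0$ of Lemma~\ref{lem:cuculescu}(ii).

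Hence $P_\lambda XP_\lambda-Q_\lambda XQ_\lambda=p_\lambda Xp_\lambda$ exactly, which is the leaf summand of $U_\lambda$. No further bookkeeping is needed, and the split corners require no spectral input at all. Your side remark is true: the rank bound would survive even with cross terms, because $S_a$ acts by a scalar on $P_\lambda\cB(\cH)P_\lambda$. But that does not establish the identity $X-X_\flat=\sum_u(U_u+V_u)$ as stated.
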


\begin{proof}
Decompose $X$ according to \eqref{eq:treeblocks}. At a leaf,
\[
 P_\lambda XP_\lambda-Q_\lambda XQ_\lambda=p_\lambda Xp_\lambda,
\]
because the mixed spectral terms vanish. On $\Omega_{v_-}\times\Omega_{v_+}$,
\[
 P_{v_-}XP_{v_+}-Q_{v_-}XQ_{v_+}
 =(P_{v_-}-Q_{v_-})XP_{v_+}+Q_{v_-}X(P_{v_+}-Q_{v_+}).
\]
Insert $P_w-Q_w=\sum_{w\preceq u}p_u$ and use the analogous identity on the opposite corner. For a fixed $u$, this yields $p_uXP_{s(v,u)}$ and $Q_{s(v,u)}Xp_u$ once for each $v\prec u$. The complementary child projections along the chain from $o$ to $u$ are orthogonal and satisfy
\[
 \sum_{v\prec u}P_{s(v,u)}=I-P_u.
\]
This proves the identity.

Lemma~\ref{lem:profiles} gives $\rank(S_a(U_u))\le2^r\rank(p_u)$, including the leaf case. Since $V_u=(I-P_u)V_up_u$, the same lemma gives $\rank(S_a(V_u))\le2^r\rank(p_u)$. Therefore, rank subadditivity and \eqref{eq:rankbudget} give \eqref{eq:badrank}.
\end{proof}

\begin{proof}[Proof of Theorem~\ref{thm:tree}]
First suppose $X\ge0$. For $t>0$, set $\kappa=t2^{r/2}/A$ and $b=\sum_u\rank(p_u)$. The counting estimates, Lemma~\ref{lem:discarded} and Lemma~\ref{lem:retained} give
\[
 \begin{aligned}
 N(t;S_a(X))
 &\le2^{r+1}b+\frac{A^2}{t^2}\norm{X_\flat}_2^2\\
 &=\frac{A^2}{t^2}\bigl(2\kappa^2b+\norm{X_\flat}_2^2\bigr)
 \le\frac{3A^2\kappa}{t^2}\Tr(X).
 \end{aligned}
\]
Consequently,
\begin{equation}\label{eq:positive-tree}
 \norm{S_a(X)}_{1,\infty}\le3\cdot2^{r/2}A\Tr(X).
\end{equation}

For general $X$, let $X=U|X|$ be its polar decomposition. We then have
\[
 \mathcal X=\begin{pmatrix}|X^*|&X\\X^*&|X|\end{pmatrix}
 =\begin{pmatrix}U\\I\end{pmatrix}|X|\begin{pmatrix}U^*&I\end{pmatrix}\ge0,
 \qquad\Tr(\mathcal X)=2\norm X_1.
\]
After reordering coordinates, this is a matrix on the same block decomposition with terminal sets $N_\lambda\times\{1,2\}$. The amplified symbol $a\otimes\mathbf J_2$ has the same complexity and supremum norm, and $S_a(X)$ is a corner of $S_{a\otimes\mathbf J_2}(\mathcal X)$. Compression decreases singular values, so \eqref{eq:positive-tree} proves $W(a)\le6\cdot2^{r/2}A$. As observed at the start of the section, the same bound holds for $\Wa(a)$.

To prove the lower bound, take the full binary tree in which every leaf has depth $r+1$, with singleton leaf sets. There are $n=2^{r+1}$ leaves, and every symbol on these leaves has complexity at most $r$, since $r$ refinements below either child of any split reach only leaves. Take
\[
 a=\begin{pmatrix}1&1\\1&-1\end{pmatrix}^{\otimes(r+1)}.
\]
Then $aa^*=nI$. Testing $\mathbf J_n/n$, a rank-one matrix of trace norm one, gives $W(a)\ge\sqrt n$. Factoring $a$ through its rows and the coordinate vectors gives $\gamma_2(a)\le\sqrt n$, hence $\Wa(a)\le\sqrt n$. Thus $W(a)=\Wa(a)=2^{(r+1)/2}$.
\end{proof}

\begin{proposition}[Rank across tree cuts]\label{prop:cutrank}
For a finite binary block decomposition of $\Omega$ indexed by $\mathcal T$, let $a=(a_{ij})_{i,j\in\Omega}$ be any scalar symbol constant on every terminal rectangle $N_\lambda\times N_\mu$. Suppose $L\ge1$ is an integer and, for every vertex $u$,
\begin{equation}\label{eq:cutrank}
 \begin{aligned}
 \rank\bigl(a|_{\Omega_u\times(\Omega\setminus\Omega_u)}\bigr)&\le L,\\
 \rank\bigl(a|_{(\Omega\setminus\Omega_u)\times\Omega_u}\bigr)&\le L.
 \end{aligned}
\end{equation}
Then
\begin{equation}\label{eq:cutrankweak}
 \Wa(a)\le6\sqrt L\,\maxnorm a.
\end{equation}
Zero coordinate projections are permitted in the tree.
\end{proposition}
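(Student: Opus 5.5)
The plan is to rerun the proof of Theorem~\ref{thm:tree} verbatim, changing only Lemma~\ref{lem:profiles}. In that proof the complexity parameter $r$ enters in exactly one place: the rank inflation bound $\rank(S_a(eY(I-P_u)))\le2^r\rank(e)$ and its column analogue. Everything else uses only $A=\maxnorm a$, the tree structure and Cuculescu's projections. These ingredients are Lemmas~\ref{lem:cuculescu}, \ref{lem:retained} and~\ref{lem:discarded}, the polar-decomposition trick and amplification. So I will prove the following replacement for \eqref{eq:rankinflation}: for every vertex $u$, every projection $e\le P_u$ and every $Y$,
\[
 \rank\bigl(S_a(eY(I-P_u))\bigr)\le L\rank(e),\qquad
 \rank\bigl(S_a((I-P_u)Ye)\bigr)\le L\rank(e).
\]
I also need the leaf statement $\rank(S_a(eY(I-P_\lambda)+eYe))\le L\rank(e)$. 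Then $2^r$ becomes $L$ throughout, and choosing $\kappa=t\sqrt L/A$ gives $N(t;S_a(X))\le 3A\sqrt L\,\Tr(X)/t$ for $X\ge0$. The $2\times2$ dilation then yields $W(a)\le6\sqrt L\,A$. The hypotheses are stable under $a\mapsto a\otimes\mathbf J_d$: this tensoring keeps ranks of cut submatrices, constancy on terminal rectangles and $\maxnorm a$. The bound for $\Wa$ therefore follows as before.

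For the rank inflation, I use a rank factorization of the cut block. By \eqref{eq:cutrank}, there are $\phi_1,\dots,\phi_L$ on $\Omega_u$ and $\psi_1,\dots,\psi_L$ on $\Omega\setminus\Omega_u$ with $a_{ij}=\sum_{\ell}\phi_\ell(i)\psi_\ell(j)$ for $i\in\Omega_u$, $j\notin\Omega_u$. Let $D_{\phi_\ell}$ and $D_{\psi_\ell}$ be the corresponding diagonal operators. Then
\[
 S_a(eY(I-P_u))=\sum_{\ell=1}^L D_{\phi_\ell}\,eY(I-P_u)\,D_{\psi_\ell}.
\]
Each summand has rank at most $\rank(e)$, which gives the bound $L\rank(e)$. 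The column version is identical, using the second inequality in \eqref{eq:cutrank}.

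The leaf case needs a little care. Here $a$ is constant, say $\zeta$, on $N_\lambda\times N_\lambda$, so $S_a(eYe)=\zeta eYe$. I add this as an extra term $D_{\mathbf 1}\,eYe\,(\zeta I)$. The range of every summand lies in $\ran(D_{\phi_\ell}e)$ or in $e\cH$, and these spaces together span a space of dimension at most $(L+1)\rank(e)$. That costs an extra $\rank(e)$, which I must avoid. The fix is to use constancy on terminal rectangles: it forces all rows indexed by $N_\lambda$ to be equal, both on $\Omega\setminus N_\lambda$ and on $N_\lambda$. So the rows of $a$ restricted to $N_\lambda\times\Omega$ form a single vector $b$, and
\[
 S_a(eY(I-P_\lambda)+eYe)=eY\bigl((I-P_\lambda)+e\bigr)D_b.
\]
This has rank at most $\rank(e)\le L\rank(e)$, exactly as in Lemma~\ref{lem:profiles}. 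The same constancy also makes the $2^r$-refinement argument unnecessary for internal vertices.

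I expect the main obstacle to be checking that Lemma~\ref{lem:discarded} only ever applies the rank bound with $e=p_u\le P_u$ against $I-P_u$. This does hold, since $V_u=(I-P_u)V_up_u$ and $U_u$ is of the stated form. I also need to confirm that zero projections $P_u=0$ cause no trouble. They do not: then $p_u=0$ and the terms vanish, and Cuculescu's construction is unaffected. With these checks, \eqref{eq:cutrankweak} follows.
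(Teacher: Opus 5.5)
Your proposal is correct and follows the paper's proof essentially step for step. The cut-rank factorization $a_{ij}=\sum_{\nu\le L}f_\nu(i)g_\nu(j)$ replaces the $2^r$ rank-inflation bound of Lemma~\ref{lem:profiles}, and the leaf case uses constancy on terminal rectangles to keep the left support inside $e$; the remaining steps (Lemmas~\ref{lem:retained} and~\ref{lem:discarded}, the choice $\kappa=t\sqrt L/\maxnorm a$, the $2\times2$ dilation, stability of cut ranks under $\otimes\mathbf J_d$, and the remark on zero projections) carry over unchanged, exactly as in the paper.
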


\begin{proof}
Put $A=\maxnorm a$. If $A=0$, then the assertion is immediate, so assume $A>0$. Fix $X\ge0$ and $\kappa>0$, and define $p_u$ and $X_\flat$ by \eqref{eq:cuculescu} and \eqref{eq:retained}.
For each $u\in\mathcal T$, the first rank bound in \eqref{eq:cutrank} gives a factorization
\[
 a_{ij}=\sum_{\nu=1}^Lf_\nu(i)g_\nu(j),
 \qquad i\in\Omega_u,\quad j\notin\Omega_u,
\]
adding zero summands if necessary. Let $A_\nu$ and $B_\nu$ be the diagonal operators given by $f_\nu$ and $g_\nu$, extended by zero outside their respective blocks. For every $Y\in\cB(\cH)$ and every orthogonal projection $e\le P_u$,
\[
 S_a(eY(I-P_u))=
 \sum_{\nu=1}^L A_\nu eY(I-P_u)B_\nu,
\]
so its rank is at most $L\rank(e)$. The second rank bound in \eqref{eq:cutrank} gives the analogous estimate for $S_a((I-P_u)Ye)$ by the same argument. At a leaf, constancy on the terminal rectangles shows directly that
$S_a(eY(I-P_u)+eYe)$ has left support at most $e$, as in the last assertion of Lemma~\ref{lem:profiles}.

The decomposition of $X-X_\flat$ in the proof of Lemma~\ref{lem:discarded} is independent of the symbol. The preceding rank bounds therefore give
\[
 \rank\bigl(S_a(X-X_\flat)\bigr)
 \le2L\sum_u\rank(p_u).
\]
Combining this with Lemma~\ref{lem:retained}, and choosing
$\kappa=t\sqrt L/\maxnorm a$, gives
\[
 N(t;S_a(X))\le\frac{3\sqrt L\,\maxnorm a}{t}\Tr(X)
 \qquad(X\ge0).
\]
For arbitrary inputs, the positive $2$-by-$2$ dilation used in the proof of Theorem~\ref{thm:tree} gives the factor $2$. Moreover, under amplification, a cut matrix is replaced by its tensor product with $\mathbf J_d$, which has the same rank. Thus \eqref{eq:cutrankweak} follows. Zero projections contribute neither rank nor Hilbert--Schmidt energy.
\end{proof}

\section{The one-dimensional localization}\label{sec:grids}

We use two adjacent dyadic grids \cite{Mei03,HK12} and a $C^1$ partition of unity to localize away from the diagonal at each scale.

In this section and the next, let $M\in C^1(\RR^2\setminus\{x=y\})$ satisfy $H_{1,1}(M)<\infty$, with $K_0$ as in \eqref{eq:HMSnorm}. Put
\[
 K_1=\sup_{x\ne y}|x-y|
 \bigl(|\partial_xM(x,y)|+|\partial_yM(x,y)|\bigr),
 \qquad H_{1,1}(M)=K_0+K_1.
\]

For $\varepsilon\in\{+1,-1\}$ and $k\in\Z$, set
\begin{equation}\label{eq:grids}
 \begin{aligned}
 \theta_k^\varepsilon&=\varepsilon(-1)^k/3,\\
 \cD_k^\varepsilon&=\{2^k[\nu+\theta_k^\varepsilon,\nu+1+\theta_k^\varepsilon):\nu\in\Z\},
 \qquad \cD^\varepsilon=\bigcup_{k\in\Z}\cD_k^\varepsilon.
 \end{aligned}
\end{equation}
We abbreviate the signs $+1,-1$ to $+,-$ in subscripts and superscripts. For a bounded half-open interval $Q$, let $l_Q$ be its left endpoint and $|Q|$ its length, so that $Q=[l_Q,l_Q+|Q|)$. Write $Q_-,Q_+$ for its left and right halves, and put
\[
 \mathcal R(Q)=(Q_-\times Q_+)\cup(Q_+\times Q_-).
\]

\begin{lemma}\label{lem:grids}
The grids in \eqref{eq:grids} have the following properties.
\begin{enumerate}[label=\textup{(\roman*)},leftmargin=*]
\item Each interval at scale $2^{k+1}$ is the union of two intervals at scale $2^k$ in the same grid. In particular, intervals in one grid are nested or disjoint.
\item An endpoint from $\cD_k^+$ and an endpoint from $\cD_k^-$ are separated by at least $2^k/3$.
\item Every bounded subset of $\RR$ is contained in an interval of either prescribed grid.
\end{enumerate}
\end{lemma}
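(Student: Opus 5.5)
The statement is elementary, so I will verify the three properties directly from the explicit form \eqref{eq:grids} of the intervals. The key quantity is the left endpoint of an interval in $\cD_k^\varepsilon$, namely $2^k(\nu+\theta_k^\varepsilon)$. I will check that these endpoints are compatible across consecutive scales, compute the offsets between the two grids modulo the lattice, and deduce covering from nestedness and separation.

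For (i), take $Q=2^{k+1}[\nu+\theta_{k+1}^\varepsilon,\nu+1+\theta_{k+1}^\varepsilon)$. It suffices to show that its left endpoint and midpoint are left endpoints of intervals in $\cD_k^\varepsilon$; each of the two halves then has length $2^k$ and belongs to $\cD_k^\varepsilon$. The left endpoint is $2^k(2\nu+2\theta_{k+1}^\varepsilon)$. Since $\theta_{k+1}^\varepsilon=-\theta_k^\varepsilon$, we get $2\theta_{k+1}^\varepsilon-\theta_k^\varepsilon=-3\theta_k^\varepsilon=\mp1\in\Z$. Hence the endpoint equals $2^k(\nu'+\theta_k^\varepsilon)$ for an integer $\nu'$, and the midpoint is obtained by adding $2^k$. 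Each scale partitions $\RR$, so iterating shows that every interval of $\cD_{k'}^\varepsilon$ with $k'\ge k$ is a union of intervals of $\cD_k^\varepsilon$. Consequently, two intervals from the same grid are nested or disjoint.

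For (ii), the endpoints at scale $2^k$ in the two grids are $2^k(\nu+\theta_k^+)$ and $2^k(\mu+\theta_k^-)$. Their difference is $2^k(\nu-\mu+2\theta_k^+)$ with $2\theta_k^+=\pm2/3$. The distance from $\pm2/3$ to $\Z$ is $1/3$, so the separation is at least $2^k/3$.

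For (iii), I will argue that the only place needing care is the covering property, and it follows from (ii) by a scale argument. Let $S$ be a bounded set, contained say in $[-R,R]$. First I need the point $0$ not to be an endpoint of the chosen interval. Since $0=2^k(\nu+\theta_k^\varepsilon)$ would force $\theta_k^\varepsilon\in\Z$, which is impossible, $0$ is never an endpoint. More strongly, the nearest endpoint of $\cD_k^\varepsilon$ to $0$ is at distance $2^k\dist(\theta_k^\varepsilon,\Z)=2^k/3$. Now choose $k$ with $2^k/3>R$. The interval of $\cD_k^\varepsilon$ containing $0$ then contains $(-2^k/3,2^k/3)\supset[-R,R]\supset S$. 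This works for either sign $\varepsilon$, so each grid separately covers every bounded set, as required.
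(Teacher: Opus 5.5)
Your proposal is correct and follows essentially the same route as the paper: $2\theta_{k+1}^\varepsilon-\theta_k^\varepsilon\in\Z$ gives the nesting, the offset $\pm2/3$ between the two endpoint lattices gives the separation, and the fact that the interval containing $0$ contains $(-2^k/3,2^k/3)$ gives the covering. One small correction: the covering step uses $\dist(\theta_k^\varepsilon,\Z)=1/3$ directly rather than (ii), as your own computation shows, but this does not affect the argument.
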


\begin{proof}
For (i), observe that $2\theta_{k+1}^\varepsilon-\theta_k^\varepsilon=\varepsilon(-1)^{k+1}\in\Z$. Thus a scale-$2^{k+1}$ interval has endpoints on the scale-$2^k$ lattice and length twice the mesh. For (ii), the union of the two endpoint lattices is $2^k(\Z+\{1/3,2/3\})$, whose successive gaps are $2^k/3$ and $2^{k+1}/3$. For (iii), the interval containing zero is either $2^k[-2/3,1/3)$ or $2^k[-1/3,2/3)$. Both contain $(-2^k/3,2^k/3)$, which exhausts $\RR$ as $k\to\infty$.
\end{proof}

Choose the $C^1$ cutoff
\[
 \chi(t)=\begin{cases}0,&t\le0,\\3t^2-2t^3,&0<t<1,\\1,&t\ge1,\end{cases}
 \qquad
 \phi(t)=\chi(12t)\chi(12(1-t)),\quad
 \psi(t)=\chi(24t-1)\chi(2-6t).
\]
Since $0\le\chi'\le3/2$, these functions take values in $[0,1]$ and satisfy
\begin{equation}\label{eq:cutoffs}
 \begin{aligned}
 &\phi=1\text{ on }[1/12,11/12],\quad
 \operatorname{supp}\phi\subset[0,1],\quad\norm{\phi'}_\infty\le18,\\
 &\psi=1\text{ on }[1/12,1/6],\quad
 \operatorname{supp}\psi\subset[1/24,1/3],\quad\norm{\psi'}_\infty\le36.
 \end{aligned}
\end{equation}
Since the derivatives of the two factors in each product have disjoint supports, we obtain the stated bounds.

Set $\phi_Q(x)=\phi((x-l_Q)/|Q|)$ and, for $x\ne y$, define
\begin{equation}\label{eq:weights}
 \begin{aligned}
 G_\varepsilon(x,y)&=\sum_{Q\in\cD^\varepsilon}
 \psi\Bigl(\frac{|x-y|}{|Q|}\Bigr)\phi_Q(x)\phi_Q(y),\\
 w_\varepsilon&=\frac{G_\varepsilon}{G_++G_-},\qquad F_\varepsilon=Mw_\varepsilon.
 \end{aligned}
\end{equation}
Set $F_\varepsilon(x,x)=0$.

\begin{lemma}\label{lem:partition}
Suppose $H_{1,1}(M)<\infty$ and put $K=24K_1+792K_0$.
\begin{enumerate}[label=\textup{(\roman*)},leftmargin=*]
\item $G_\pm$ are $C^1$ off the diagonal, $G_++G_-\ge1$, and
\begin{equation}\label{eq:weightest}
 0\le w_\varepsilon\le1,\qquad w_++w_-=1,\qquad
 |\partial_xw_\varepsilon|+|\partial_yw_\varepsilon|\le\frac{33}{|x-y|}.
\end{equation}
\item For every $Q\in\cD^\varepsilon$, throughout $\mathcal R(Q)$,
\begin{equation}\label{eq:shell}
 |F_\varepsilon|\le K_0,\qquad
 |\partial_xF_\varepsilon|+|\partial_yF_\varepsilon|\le\frac K{|Q|}.
\end{equation}
Moreover, $F_\varepsilon$ and its first derivatives vanish there when $|x-y|<|Q|/24$.
\end{enumerate}
\end{lemma}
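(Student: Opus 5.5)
We must prove Lemma~\ref{lem:partition}. The plan is to first establish the pointwise covering property behind $G_++G_-\ge1$, then derive the derivative bounds on $G_\varepsilon$, $w_\varepsilon$, and finally on $F_\varepsilon=Mw_\varepsilon$ restricted to a shell $\mathcal R(Q)$.

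\emph{Step 1: $G_++G_-\ge1$.} Fix $x\ne y$ and choose $k$ with $2^k/12\le|x-y|<2^k/6$, so that $\psi(|x-y|/2^k)=1$ by \eqref{eq:cutoffs}. We claim that for one sign $\varepsilon$ some $Q\in\cD_k^\varepsilon$ contains both $x,y$ in its middle part $l_Q+|Q|[1/12,11/12]$. Then $\phi_Q(x)=\phi_Q(y)=1$ and this single nonnegative term gives $G_\varepsilon\ge1$. For the claim, let $m=(x+y)/2$. By Lemma~\ref{lem:grids}(ii), the endpoints of $\cD_k^+\cup\cD_k^-$ are $2^k/3$-separated. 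Hence $m$ is within distance $<2^k/6$ of at most one such endpoint. There is therefore a grid $\varepsilon$ for which $m$ has distance $\ge2^k/6$ from all endpoints of $\cD_k^\varepsilon$. Then $x,y$ lie within $|x-y|/2<2^k/12$ of $m$, hence at distance $>2^k/12$ from the endpoints, so they lie in the middle part of the same interval. Smoothness ($C^1$) off the diagonal holds because, locally in $(x,y)$ with $|x-y|\sim\delta$, only scales with $|Q|\in(3|x-y|,24|x-y|)$ contribute (support of $\psi$). Within each scale only $O(1)$ intervals contribute, since $\phi_Q(x)\ne0$ forces $x\in Q$. So the sum is locally finite.

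\emph{Step 2: derivative of $w_\varepsilon$.} Differentiate one term $T_Q=\psi(|x-y|/|Q|)\phi_Q(x)\phi_Q(y)$. We get $|\partial_xT_Q|\le(36+18)/|Q|$ on its support. For fixed $x,y$, at each scale at most one $Q$ contains $x$, and only the scales with $3|x-y|<|Q|<24|x-y|$ contribute; these are at most three dyadic scales. Summing $\sum 1/|Q|$ over those scales gives $\le(1/3+1/6+1/12)/|x-y|$ roughly, and similarly for $\partial_y$. Then apply the quotient rule:
\[
\partial w_\varepsilon=\frac{\partial G_\varepsilon\,G_{-\varepsilon}-G_\varepsilon\,\partial G_{-\varepsilon}}{(G_++G_-)^2}.
\]
Bounding this by $(|\partial G_+|+|\partial G_-|)/(G_++G_-)$ and using $G_++G_-\ge1$ yields $C/|x-y|$. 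The main obstacle is getting exactly $33$ rather than some $O(1)$ constant. I would try counting carefully: per point and grid, at most three nonzero terms, and $\psi'$ and $\phi'$ are supported on disjoint pieces. The identity $w_++w_-=1$ and $0\le w_\varepsilon\le1$ are immediate.

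\emph{Step 3: shell estimates.} On $\mathcal R(Q)$ we have $|F_\varepsilon|\le|M|\le K_0$. We also have $|\partial F_\varepsilon|\le|\partial M|+K_0|\partial w_\varepsilon|\le(K_1+33K_0)/|x-y|$. This blows up near the diagonal, so we need the vanishing statement. Suppose $(x,y)\in Q_-\times Q_+$ with $|x-y|<|Q|/24$. Then $x,y$ are close to the midpoint of $Q$, an endpoint of the $\varepsilon$-grid at every scale $\le|Q|/2$. Any $Q'\in\cD^\varepsilon$ contributing to $G_\varepsilon$ at $(x,y)$ needs $|Q'|<24|x-y|<|Q|$ and must contain both $x$ and $y$. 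But the midpoint separates $x$ from $y$, and it is an endpoint at all scales $\le|Q|/2$, so no such $Q'$ exists. Hence $G_\varepsilon\equiv0$ in a neighbourhood of $(x,y)$, and so $F_\varepsilon$ and its first derivatives vanish there. On the remaining part of $\mathcal R(Q)$ we have $|x-y|\ge|Q|/24$, so $|\partial F_\varepsilon|\le24(K_1+33K_0)/|Q|=K/|Q|$, giving \eqref{eq:shell}.
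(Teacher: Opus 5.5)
Your outline has the same structure as the paper's proof, and its qualitative content is correct: the covering argument, local finiteness, the quotient rule, and the nesting argument for the vanishing statement. Your midpoint version of the covering step is equivalent to the paper's interval $(x-\ell/12,y+\ell/12)$ of length at most $\ell/3$.

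\textbf{The gap is the explicit constant $33$ in \eqref{eq:weightest}.} You flag it as the main obstacle, but then use it in Step~3 to obtain $K=24K_1+792K_0$. The counting you describe does not produce it. The uniform bound $|\psi'|\le36$ together with $\sum1/|Q|<7/(12\rho)$ gives only $|\partial_xG_\varepsilon|+|\partial_yG_\varepsilon|<63/\rho$. Your quotient bound $(|\partial G_+|+|\partial G_-|)/(G_++G_-)$ then doubles this. Two refinements are needed.

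\emph{Scale-wise bounds on $\psi'$.} Let $\ell$ be the smallest dyadic length exceeding $3\rho$. Only $|Q|=\ell,2\ell,4\ell$ contribute, and there $\rho/|Q|$ lies in $[1/6,1/3)$, $[1/12,1/6)$, $[1/24,1/12)$ respectively. On these ranges $|\psi'|\le9$, $\psi'=0$, $|\psi'|\le36$: only the factor $\chi(2-6t)$ varies on the first range, neither factor on the second, and only $\chi(24t-1)$ on the third. One summand satisfies $|\partial_xT_Q|+|\partial_yT_Q|\le(2|\psi'(\rho/|Q|)|+36)/|Q|$. Hence each grid gives $54/\ell+18/\ell+27/\ell=99/\ell<33/\rho$.

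\emph{Weighted quotient rule.} Weight each grid's derivative by the other grid's mass:
\[
 |\partial_xw_+|+|\partial_yw_+|
 \le\frac{(|\partial_xG_+|+|\partial_yG_+|)\,G_-+G_+\,(|\partial_xG_-|+|\partial_yG_-|)}{(G_++G_-)^2}
 \le\frac{33}{\rho}\cdot\frac1{G_++G_-}\le\frac{33}{\rho}.
\]

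Your remark that $\psi'$ and $\phi'$ have disjoint supports is not the relevant mechanism. The disjointness in the paper concerns the two factors inside $\phi$, and inside $\psi$, and only yields $\|\phi'\|_\infty\le18$ and $\|\psi'\|_\infty\le36$. As written, you prove the lemma with a larger absolute constant (about $126$) in place of $33$, and correspondingly in place of $792$. That would suffice for every later use, but it is not the stated result.

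\textbf{A smaller point in Step~3.} Put $c=l_Q+|Q|/2$. The set $Q_+$ contains $c$, and $G_\varepsilon$ need not vanish identically near a point $(x,c)$ with $x<c$. Nearby points with both coordinates below $c$ can be caught by a summand from some $Q'=[c-|Q'|,c)\in\cD^\varepsilon$ of admissible scale. So your claim that $G_\varepsilon\equiv0$ on a neighbourhood fails on that edge. The conclusion survives: each summand and its first derivatives vanish unless $x,y\in\operatorname{int}Q'$ and $3\rho<|Q'|<24\rho$, since $\phi,\phi'$ vanish outside $(0,1)$ and $\psi,\psi'$ vanish outside $(1/24,1/3)$. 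This is how the paper argues. Alternatively, use continuity of the first derivatives of $F_\varepsilon$ off the diagonal.
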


\begin{proof}
Fix $x\ne y$ and put $\rho=|x-y|$.

First, a summand in $G_\varepsilon$, or one of its first derivatives, can be nonzero only if $3\rho<|Q|<24\rho$ and $x,y\in\operatorname{int}Q$. These conditions restrict the scale to finitely many values locally off the diagonal. Only finitely many intervals at each scale meet a fixed small neighborhood. Therefore the sum is locally finite and $C^1$. At a given point, at most one interval per scale contributes.

To prove $G_++G_-\ge1$, assume $x<y$ and choose a dyadic length $\ell$ with $6\rho\le\ell<12\rho$. The open interval
\[
 (x-\ell/12,y+\ell/12)
\]
has length at most $\ell/3$. By Lemma~\ref{lem:grids}(ii), it contains at most one endpoint from the union of the two scale-$\ell$ lattices. Therefore one of the grids has no endpoint in this interval, so in that grid there is a $Q$ with
\[
 [x-\ell/12,y+\ell/12)\subset Q.
\]
Thus $\phi_Q(x)=\phi_Q(y)=1$, and $\psi(\rho/\ell)=1$ because $1/12<\rho/\ell\le1/6$. That summand equals one, so $G_++G_-\ge1$.

Let $\ell$ be the smallest dyadic length greater than $3\rho$. The only possible contributing lengths are $\ell,2\ell,4\ell$, with a length at the boundary of the support contributing zero. At these three scales, respectively, $\rho/|Q|$ lies in $[1/6,1/3)$, $[1/12,1/6)$ and $[1/24,1/12)$. The corresponding bounds for $|\psi'|$ are $9,0,36$. For a summand at scale $|Q|$, the sum of its two absolute first derivatives is bounded by $(2|\psi'(\rho/|Q|)|+36)/|Q|$. Hence
\[
 |\partial_xG_\varepsilon|+|\partial_yG_\varepsilon|
 \le\frac{54}{\ell}+\frac{36}{2\ell}+\frac{108}{4\ell}
 =\frac{99}{\ell}\le\frac{33}{\rho}.
\]
The quotient rule and $G_++G_-\ge1$ give
\[
 |\partial_xw_+|+|\partial_yw_+|
 \le\frac{33}{\rho}\frac{G_-+G_+}{(G_++G_-)^2}
 \le\frac{33}{\rho}.
\]
The same bound holds for $w_-=1-w_+$, proving (i).

For (ii), take $(x,y)\in\mathcal R(Q)$. If an interval $Q'$ in the same grid contributes to $G_\varepsilon$ or its derivative, then $Q'$ contains both points. Since they lie in opposite halves of $Q$, nesting forces $Q\subseteq Q'$. We also have $|Q'|<24\rho$. Therefore, if $\rho<|Q|/24$, then no summand or derivative contributes, which proves the vanishing assertion. Otherwise the product rule gives
\[
 |\partial_xF_\varepsilon|+|\partial_yF_\varepsilon|
 \le\frac{K_1+33K_0}{\rho}
 \le\frac{24K_1+792K_0}{|Q|}=\frac K{|Q|}.
\]
Finally $|F_\varepsilon|\le|M|\le K_0$.
\end{proof}

\section{The one-dimensional model}\label{sec:trees}

Retain $F_\varepsilon$ from \eqref{eq:weights} and $K=24K_1+792K_0$ from Lemma~\ref{lem:partition}.

\subsection{The block decomposition of a finite sample}
Fix a grid $\cD^\varepsilon$ and a finite nonempty set $\Theta\subset\RR$. We construct a finite rooted full binary tree $\mathcal T$ whose vertices are subsets of $\Theta$. The root is $\Theta$, and singletons are leaves. For a subset $\mathcal C$ with at least two points, let $Q(\mathcal C)$ be the smallest interval of $\cD^\varepsilon$ containing it. Indeed, Lemma~\ref{lem:grids}(iii) gives a containing interval. Since all containing intervals form a chain and their dyadic lengths are bounded below by $\operatorname{diam}\mathcal C>0$, a smallest interval exists. The two children are
\[
 \mathcal C\cap Q(\mathcal C)_-,\qquad
 \mathcal C\cap Q(\mathcal C)_+.
\]
By minimality, both children are nonempty. Since each child has fewer points than its parent, the construction terminates.

Write $\mathcal C(v)$ for the subset at $v$ and $Q(v)=Q(\mathcal C(v))$ for an internal vertex. Thus we omit scales at which only one dyadic child meets the sample. We call this a \emph{compressed dyadic tree}. If $u$ lies $s\ge1$ edges below an internal vertex $v$, successive halving shows that $\mathcal C(u)$ lies in a dyadic interval of length at most
\begin{equation}\label{eq:clusterdiam}
 2^{-s}|Q(v)|.
\end{equation}

For each vertex, choose the midpoint of the extreme sample points,
\[
 \xi_u=\tfrac12\bigl(\min\mathcal C(u)+\max\mathcal C(u)\bigr).
\]
If $u'$ is a descendant of $u$, then $|\xi_{u'}-\xi_u|\le\tfrac12\operatorname{diam}\mathcal C(u)$. For $r\in\mathbb N_0$ and a leaf $\lambda$ below an internal $v$, let $v_\pm$ be the child containing it and let $u_r(\lambda;v)$ be the unique member of $\mathcal P_r(v_\pm)$ that is an ancestor of $\lambda$.

For a finite nonempty list $(x_i)_{i=1}^N$, put $\Omega=\{1,\ldots,N\}$ and take $\Theta$ to be its set of distinct values. Define $\Omega_u=\{i\in\Omega:x_i\in\mathcal C(u)\}$ and $N_\lambda=\Omega_\lambda$. If $i,i'$ belong to distinct leaves $\lambda,\lambda'$ with least common ancestor $v$, define
\begin{equation}\label{eq:freezing}
 h_{\varepsilon,r}(i,i')=
 F_\varepsilon\bigl(\xi_{u_r(\lambda;v)},\xi_{u_r(\lambda';v)}\bigr).
\end{equation}
Set $h_{\varepsilon,r}=0$ on the leaf blocks. If $\Theta$ is a singleton, all these matrices are zero.

\begin{lemma}\label{lem:expansion}
The symbol $h_{\varepsilon,r}$ has descendant complexity at most $r$, and $\maxnorm{h_{\varepsilon,0}}\le K_0$. For $r\ge1$, the difference
\[
 \delta_{\varepsilon,r}=h_{\varepsilon,r}-h_{\varepsilon,r-1}
\]
has complexity at most $r$ and
\[
 \maxnorm{\delta_{\varepsilon,r}}\le\min\{2K_0,K2^{-r-1}\}.
\]
Moreover,
\begin{equation}\label{eq:expansion}
 \bigl(F_\varepsilon(x_i,x_{i'})\bigr)_{i,i'=1}^N
 =h_{\varepsilon,0}+\sum_{r\ge1}\delta_{\varepsilon,r},
\end{equation}
with only finitely many nonzero summands.
\end{lemma}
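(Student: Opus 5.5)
Three checks are needed: the complexity assertions, the sup-norm bounds, and the telescoping identity \eqref{eq:expansion}. The sup-norm bound on $\delta_{\varepsilon,r}$ is the only step with real content.

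\emph{Complexity.} Fix an internal vertex $v$ and blocks $w\in\mathcal P_r(v_-)$, $w'\in\mathcal P_r(v_+)$. Let $i\in\Omega_w$, $i'\in\Omega_{w'}$, lying in leaves $\lambda,\lambda'$. Their least common ancestor is $v$. The vertex $u_r(\lambda;v)$ is the unique member of $\mathcal P_r(v_-)$ above $\lambda$, which is $w$; similarly $u_r(\lambda';v)=w'$. So $h_{\varepsilon,r}(i,i')=F_\varepsilon(\xi_w,\xi_{w'})$ is constant on $\Omega_w\times\Omega_{w'}$, and symmetrically on the opposite corner. On leaf blocks the symbol is $0$, hence constant. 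The partition $\mathcal P_{r-1}(v_\pm)$ is coarser than $\mathcal P_r(v_\pm)$, so $h_{\varepsilon,r-1}$ is also constant on these rectangles. Therefore $\delta_{\varepsilon,r}$ has complexity at most $r$.

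The bound $\maxnorm{h_{\varepsilon,0}}\le K_0$ follows from $|F_\varepsilon|\le|M|\le K_0$; this holds everywhere since $w_\varepsilon\in[0,1]$, and $F_\varepsilon$ vanishes on the diagonal. The same pointwise bound gives $\maxnorm{\delta_{\varepsilon,r}}\le2K_0$.

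\emph{The bound $K2^{-r-1}$.} Fix $i,i'$ with least common ancestor $v$, and set $Q=Q(v)$, $u=u_r(\lambda;v)$, $\tilde u=u_{r-1}(\lambda;v)$, and likewise $u',\tilde u'$ on the other side. Since $\mathcal C(v_\pm)=\mathcal C(v)\cap Q_\pm$, the midpoints $\xi$ of any descendants of $v_-$ lie in $Q_-$, because $Q_-$ is an interval containing the sample points. Likewise those of descendants of $v_+$ lie in $Q_+$. So every point we evaluate lies in $\mathcal R(Q)$.

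Now I bound the displacement. First suppose $u\ne\tilde u$, i.e.\ the refinement has not stopped. Then $\tilde u$ lies $r$ edges below $v$, so by \eqref{eq:clusterdiam} its cluster lies in a dyadic interval of length at most $2^{-r}|Q|$. Hence $|\xi_u-\xi_{\tilde u}|\le\frac12\operatorname{diam}\mathcal C(\tilde u)\le2^{-r-1}|Q|$. If $u=\tilde u$ (a terminal block was reached), the displacement is $0$. The same holds on the other side.

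I then move one coordinate at a time: from $(\xi_{\tilde u},\xi_{\tilde u'})$ to $(\xi_u,\xi_{\tilde u'})$, then to $(\xi_u,\xi_{u'})$. Each segment stays in the convex rectangle $Q_-\times Q_+$, which lies inside $\mathcal R(Q)$. The derivative bound \eqref{eq:shell} applies on $\mathcal R(Q)$, including the region $|x-y|<|Q|/24$, where $F_\varepsilon$ and its first derivatives vanish, so no exceptional points arise. Thus $F_\varepsilon$ is $C^1$ on each segment, and the mean value theorem gives
\[
|\delta_{\varepsilon,r}(i,i')|\le\frac{K}{|Q|}\bigl(|\xi_u-\xi_{\tilde u}|+|\xi_{u'}-\xi_{\tilde u'}|\bigr).
\]
This is at most $2\cdot2^{-r-1}K=2^{-r}K$, a factor $2$ worse than claimed.

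To recover the factor $2$, I would use the fact that \eqref{eq:shell} bounds the \emph{sum} $|\partial_xF_\varepsilon|+|\partial_yF_\varepsilon|\le K/|Q|$. Move both coordinates simultaneously along the straight segment from $(\xi_{\tilde u},\xi_{\tilde u'})$ to $(\xi_u,\xi_{u'})$. The segment stays in the rectangle, and the derivative along it is bounded by
\[
\max\bigl(|\Delta x|,|\Delta y|\bigr)\bigl(|\partial_x F_\varepsilon|+|\partial_y F_\varepsilon|\bigr)\le2^{-r-1}|Q|\cdot\frac{K}{|Q|}=K2^{-r-1}.
\]
This gives the claimed bound. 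I expect this refinement to be the main obstacle.

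\emph{Expansion.} For fixed $i,i'$ in distinct leaves, $u_r(\lambda;v)=\lambda$ once $r$ exceeds the tree depth. Then $h_{\varepsilon,r}(i,i')=F_\varepsilon(x_i,x_{i'})$, because the midpoint of a singleton cluster is the point itself. Telescoping gives \eqref{eq:expansion}, with finitely many nonzero terms since the tree is finite. On leaf blocks both sides vanish: there $x_i=x_{i'}$ and $F_\varepsilon$ is $0$ on the diagonal.
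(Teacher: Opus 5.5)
Your proposal is correct and follows the paper's proof essentially step by step: constancy of $h_{\varepsilon,r}$ and $h_{\varepsilon,r-1}$ on the $\mathcal P_r(v_\pm)$ rectangles, the midpoint displacement bound $2^{-r-1}|Q(v)|$ from \eqref{eq:clusterdiam}, integration of the gradient along the straight segment in $Q(v)_-\times Q(v)_+$ giving $\frac{K}{|Q(v)|}\max\{|\Delta x|,|\Delta y|\}\le K2^{-r-1}$, and stabilization once $r$ exceeds the tree depth. The straight-segment estimate you flag as the main obstacle is exactly the paper's argument and needs nothing further. Your preliminary coordinate-by-coordinate estimate, which loses a factor $2$, can simply be dropped.
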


\begin{proof}
For $u\in\mathcal P_r(v_-)$ and $u'\in\mathcal P_r(v_+)$, all leaf pairs below $u,u'$ have least common ancestor $v$. Therefore $h_{\varepsilon,r}$ is constant on $\Omega_u\times\Omega_{u'}$, with value $F_\varepsilon(\xi_u,\xi_{u'})$, and similarly on the transposed rectangle. For $r\ge1$, each $\Omega_u$ with $u\in\mathcal P_r(v_\pm)$ is contained in some $\Omega_w$ with $w\in\mathcal P_{r-1}(v_\pm)$. Hence $h_{\varepsilon,r-1}$ is constant on the same rectangles. All symbols vanish on the leaf blocks.

The bound for $h_{\varepsilon,0}$ and the bound $2K_0$ for each difference follow from $|F_\varepsilon|\le K_0$. Fix $r\ge1$ and indices $i\in N_\lambda$, $i'\in N_{\lambda'}$, where $\lambda,\lambda'$ are distinct leaves with least common ancestor $v$. Put $u=u_{r-1}(\lambda;v)$. If $u$ is a leaf, then the representatives at levels $r$ and $r-1$ coincide. Otherwise, $u$ lies exactly $r$ edges below $v$. The midpoint choice and \eqref{eq:clusterdiam} therefore bound their distance by $\tfrac12\operatorname{diam}\mathcal C(u)\le2^{-r-1}|Q(v)|$. The same is true in the other coordinate.

Let $\Delta x$ and $\Delta y$ denote the coordinate differences between the representative pairs at levels $r$ and $r-1$. Since both pairs lie in the same convex rectangle $Q(v)_-\times Q(v)_+$, or its reverse, we may integrate the gradient along the segment between them. Thus \eqref{eq:shell} gives
\[
 |\delta_{\varepsilon,r}(i,i')|
 \le \frac K{|Q(v)|}
 \max\bigl\{|\Delta x|,|\Delta y|\bigr\}\le K2^{-r-1}.
\]
For $r$ at least the depth of the tree, the vertices in \eqref{eq:freezing} are leaves, so $h_{\varepsilon,r}(i,i')=F_\varepsilon(x_i,x_{i'})$. Thus the differences eventually vanish, and summing them gives \eqref{eq:expansion}.
\end{proof}

\subsection{The atomic endpoint estimate}

\begin{proof}[Proof of Theorem~\ref{thm:HMSn} for $n=s=1$]
If $K_0=0$, then $M=0$ and there is nothing to prove. Suppose $K_0>0$ and fix a finite list $x_1,\ldots,x_N\in\RR$. For each $\varepsilon\in\{+1,-1\}$, put $f_\varepsilon=(F_\varepsilon(x_i,x_{i'}))_{i,i'}$. Theorem~\ref{thm:tree} and Lemma~\ref{lem:expansion} imply
\[
 \Wa(h_{\varepsilon,0})\le6K_0,\qquad
 \Wa(\delta_{\varepsilon,r})
 \le\min\{12K_0\,2^{r/2},3K\,2^{-r/2}\}.
\]
By Lemma~\ref{lem:weaksum}, applied to \eqref{eq:expansion} under each amplification, we have
\begin{equation}\label{eq:onegrid}
 \Wa(f_\varepsilon)
 \le\left(\sqrt{6K_0}+\sum_{r\ge1}
 \min\{\sqrt{12K_0}\,2^{r/4},\sqrt{3K}\,2^{-r/4}\}\right)^2.
\end{equation}
Choose the integer $R\ge0$ so that $2^R\le K/(4K_0)<2^{R+1}$. Sum the first bound for $r\le R$ and the second for $r>R$. The two geometric sums satisfy
\[
 \begin{aligned}
 \sqrt{12K_0}\sum_{r=1}^R2^{r/4}
 &\le\frac{\sqrt{12K_0}\,2^{(R+1)/4}}{2^{1/4}-1}
 \le C(K_0K)^{1/4},\\
 \sqrt{3K}\sum_{r>R}2^{-r/4}
 &=\frac{\sqrt{3K}\,2^{-R/4}}{2^{1/4}-1}
 \le C(K_0K)^{1/4}.
 \end{aligned}
\]
Since $K\ge K_0$ and $K\le792(K_0+K_1)$, \eqref{eq:onegrid} yields
\[
 \Wa(f_\varepsilon)\le C\sqrt{K_0K}
 \le C\sqrt{K_0(K_0+K_1)}.
\]

Since $w_++w_-=1$ off the diagonal and all symbols vanish on equal-value blocks, we have
\[
 (M(x_i,x_{i'}))_{i,i'}=f_++f_-.
\]
A second application of Lemma~\ref{lem:weaksum} proves \eqref{eq:endpointn} for square samples, after increasing the absolute constant.

Rectangular samples are corners of square samples on the combined row and column lists, so Remark~\ref{rem:rectangular} gives the same bound.

\end{proof}

\subsection{Passage to Hilbert--Schmidt kernels}

\begin{proof}[Proof of Corollary~\ref{cor:continuousn} for $n=s=1$]
Define the finite-range maps
\[
 \pi_n(x)=\begin{cases}2^{-n}\lfloor2^nx\rfloor,&x\in[-n,n),\\0,&x\notin[-n,n),\end{cases}
 \qquad M_n(x,y)=M(\pi_n(x),\pi_n(y)).
\]
For fixed $x\ne y$, we have $\pi_n(x)\ne\pi_n(y)$ for all sufficiently large $n$, and $(\pi_n(x),\pi_n(y))\to(x,y)$. Hence $M_n\to M$ almost everywhere and $|M_n|\le K_0$.

Fix $d\ge1$ and $X\in\Sone(L_2(\RR;\C^d))$. For each level set $E$ of $\pi_n$, choose an orthonormal basis of $L_2(E;\C^d)$, omitting zero spaces. Their union is an orthonormal basis of $L_2(\RR;\C^d)$. In this basis, $S_{M_n}\otimes\mathrm{id}_{M_d}$ is a Schur multiplier with entries $M(\pi_n(E),\pi_n(E'))$ between the subspaces associated with $E$ and $E'$. Here $\pi_n(E)$ denotes the value of $\pi_n$ on $E$. Since every finite corner is a restriction to a finite list, with repetitions allowed, the atomic estimate just proved and Lemma~\ref{lem:extension} give
\[
 \norm{(S_{M_n}\otimes\mathrm{id}_{M_d})X}_{1,\infty}
 \le C\sqrt{K_0(K_0+K_1)}\norm X_1.
\]
This extension agrees with multiplication of kernels by $M_n$, since both maps are bounded on $\Stwo$ and agree on the matrix units.

Let $k_X$ be the kernel of $X$. Then $k_X$ belongs to $L_2(\RR^2;M_d)$ with the Hilbert--Schmidt matrix norm. Since $|M_n-M|\le2K_0$, dominated convergence gives $\norm{(M_n-M)k_X}_{L_2}\to0$. Thus $(S_{M_n}\otimes\mathrm{id}_{M_d})X\to(S_M\otimes\mathrm{id}_{M_d})X$ in $\Stwo$, hence in operator norm. By continuity of singular values,
\[
 k\,s_k((S_M\otimes\mathrm{id}_{M_d})X)
 \le C\sqrt{K_0(K_0+K_1)}\norm X_1
 \qquad(k\ge1).
\]
Taking the supremum over $k$ and then over $d$ proves the assertion.
\end{proof}

\section{Product grids and smooth localization}\label{sec:productgrids}

For this section and the next, fix integers $n,s\ge1$ and $M\in C^s(\RR^{2n}\setminus\{x=y\})$ with $H_{n,s}(M)<\infty$, using \eqref{eq:HMSnorm}.

For $\varepsilon=(\varepsilon_1,\ldots,\varepsilon_n)\in\{-1,1\}^n$ and $k\in\Z$, set
\begin{equation}\label{eq:nproductgrids}
 \theta_{k,j}^{\varepsilon}=\varepsilon_j(-1)^k/3,
 \qquad
 \cD_k^\varepsilon=
 \{2^k(\nu+\theta_k^\varepsilon+[0,1)^n):\nu\in\Z^n\},
 \qquad \cD^\varepsilon=\bigcup_{k\in\Z}\cD_k^\varepsilon.
\end{equation}
Here $\theta_k^\varepsilon=(\theta_{k,1}^\varepsilon,\ldots,\theta_{k,n}^\varepsilon)$. These are $2^n$ nested dyadic cube grids. Indeed,
$2\theta_{k+1,j}^{\varepsilon}-\theta_{k,j}^{\varepsilon}\in\Z$ in each coordinate. At a fixed scale, endpoints in the two one-dimensional grids are separated by at least one third of the side length. These product grids are a concrete form of the adjacent systems in \cite{Mei03,HK12}.

Define new cutoff functions by
\[
 \eta(t)=\begin{cases}e^{-1/t},&t>0,\\0,&t\le0,\end{cases}
 \qquad
 \chi(t)=\frac{\eta(t)}{\eta(t)+\eta(1-t)}.
\]
Thus $\chi$ is smooth, equals $0$ on $(-\infty,0]$, and equals $1$ on $[1,\infty)$. Put
\[
 \phi(t)=\chi(12t)\chi(12(1-t)),
 \qquad
 \psi(t)=\chi(24t-1)\chi(2-6t).
\]
Then $\phi=1$ on $[1/12,11/12]$ and is supported in $[0,1]$, while $\psi=1$ on $[1/12,1/6]$ and is supported in $[1/24,1/3]$.

For a cube $Q=l_Q+[0,\ell(Q))^n$ of side length $\ell(Q)$, define
\[
 \phi_Q(x)=\prod_{j=1}^n
 \phi\left(\frac{x_j-l_{Q,j}}{\ell(Q)}\right).
\]
Writing $\rho=|x-y|$, set off the diagonal
\begin{equation}\label{eq:nweights}
 \begin{aligned}
 G_\varepsilon(x,y)&=\sum_{Q\in\cD^\varepsilon}
 \psi\left(\frac{\rho}{\ell(Q)}\right)\phi_Q(x)\phi_Q(y),\\
 G(x,y)&=\sum_{\varepsilon\in\{-1,1\}^n}G_\varepsilon(x,y),
 \qquad w_\varepsilon=G_\varepsilon/G,
 \qquad F_\varepsilon=Mw_\varepsilon.
 \end{aligned}
\end{equation}
We set $F_\varepsilon(x,x)=0$.

\begin{lemma}\label{lem:nlocalization}
The functions $G_\varepsilon$, $G$ and $w_\varepsilon$ in \eqref{eq:nweights} are smooth off the diagonal and satisfy
\[
 1\le G\le3\cdot2^n,
 \qquad 0\le w_\varepsilon\le1,
 \qquad \sum_\varepsilon w_\varepsilon=1,
\]
and, for every multi-index $\alpha$,
\begin{equation}\label{eq:nweightderivatives}
 |\partial_x^\alpha w_\varepsilon(x,y)|+
 |\partial_y^\alpha w_\varepsilon(x,y)|
 \le C_{n,\alpha}\rho^{-|\alpha|}.
\end{equation}

Subdivide $Q\in\cD^\varepsilon$ into its $2^n$ dyadic children by splitting successively in coordinates $1,\ldots,n$. At any intermediate binary split, let $R_-$ and $R_+$ be the two child rectangles. On $R_-\times R_+$ and on $R_+\times R_-$,
\begin{equation}\label{eq:nshell}
 |\partial_x^\alpha F_\varepsilon(x,y)|+
 |\partial_y^\alpha F_\varepsilon(x,y)|
 \le C_{n,s}H_{n,s}(M)\ell(Q)^{-|\alpha|},
 \qquad1\le|\alpha|\le s.
\end{equation}
On these products, $F_\varepsilon$ and the derivatives in \eqref{eq:nshell} vanish when $\rho<\ell(Q)/24$.
\end{lemma}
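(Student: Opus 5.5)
I would follow the one-dimensional proof of Lemma~\ref{lem:partition} and replace first derivatives by derivatives of all orders.

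\emph{Smoothness and local finiteness.} A summand of $G_\varepsilon$, or any of its derivatives, can be nonzero only if $3\rho<\ell(Q)<24\rho$ and $x,y\in\operatorname{int}Q$. So, locally off the diagonal, at most three dyadic scales contribute. Within one grid and one scale the cubes are disjoint, so at most one cube per scale contains $x$. Hence $G_\varepsilon$ is a locally finite sum of smooth functions and has at most three nonzero summands at each point, each bounded by $1$. This gives $G\le3\cdot2^n$.

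\emph{Lower bound $G\ge1$.} Choose a dyadic $\ell$ with $6\rho\le\ell<12\rho$. In each coordinate $j$, the interval $(\min(x_j,y_j)-\ell/12,\max(x_j,y_j)+\ell/12)$ has length at most $\rho+\ell/6<\ell/3$. By the one-third separation of endpoints, it meets the endpoint lattice of at most one of the two one-dimensional grids with signs $\pm$. Choose $\varepsilon_j$ to be the sign that avoids it. The product cube $Q\in\cD^\varepsilon_k$ with $2^k=\ell$ then contains the enlarged box, so $\phi_Q(x)=\phi_Q(y)=1$ and $\psi(\rho/\ell)=1$. This gives $G\ge1$, and then $0\le w_\varepsilon\le1$ and $\sum_\varepsilon w_\varepsilon=1$ follow at once.

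\emph{Derivative bounds \eqref{eq:nweightderivatives}.} For a summand at scale $\ell(Q)\sim\rho$, the chain rule gives $|\partial_x^\alpha[\psi(\rho/\ell(Q))\phi_Q(x)\phi_Q(y)]|\le C_{n,\alpha}\ell(Q)^{-|\alpha|}$. Here I use that $\rho=|x-y|$ has derivatives $|\partial^\beta\rho|\le C_\beta\rho^{1-|\beta|}$ away from the diagonal, and that $\psi$ is supported where $\rho\sim\ell(Q)$. Summing over the three scales gives $|\partial^\alpha G_\varepsilon|\le C\rho^{-|\alpha|}$, and the same bound holds for $G$. The multivariable Fa\`a di Bruno formula applied to $1/G$, with $G\ge1$, then bounds $\partial^\alpha(1/G)$, and Leibniz gives \eqref{eq:nweightderivatives}; $y$-derivatives are handled symmetrically.

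\emph{Shell estimate \eqref{eq:nshell}.} I expect this to be the main point. Take $(x,y)\in R_-\times R_+$ at an intermediate split of $Q$. The points lie in opposite halves of $Q$ with respect to some coordinate, so no dyadic descendant of $Q$ in the same grid contains both. If $Q'\in\cD^\varepsilon$ contributes to $G_\varepsilon$ or to one of its derivatives, then $Q'$ contains both points, so nesting forces $Q\subseteq Q'$. Together with $\ell(Q')<24\rho$, this gives $\rho>\ell(Q)/24$ whenever anything is nonzero, which proves the vanishing assertion. (Other grids $\varepsilon'$ may contribute to $G$, but $w_\varepsilon=0$ wherever $G_\varepsilon$ and its derivatives vanish.) On the remaining region we have $\rho\ge\ell(Q)/24$. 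I would write $\partial_x^\alpha F_\varepsilon=\sum_{\beta\le\alpha}\binom\alpha\beta\partial_x^\beta M\,\partial_x^{\alpha-\beta}w_\varepsilon$, bound $|\partial_x^\beta M|\le H_{n,s}(M)\rho^{-|\beta|}$ for $|\beta|\le s$ (and by $K_0$ when $\beta=0$), and use \eqref{eq:nweightderivatives}. This gives $C_{n,s}H_{n,s}(M)\rho^{-|\alpha|}\le C_{n,s}H_{n,s}(M)24^{|\alpha|}\ell(Q)^{-|\alpha|}$. The bound for $y$-derivatives and for the reversed product $R_+\times R_-$ follows in the same way.
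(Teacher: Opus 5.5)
Your proposal is correct and follows the paper's proof essentially step by step. The shared steps are local finiteness with at most three contributing scales and one cube per scale, the one-third endpoint separation that produces a summand equal to one, derivative bounds for $G$ at scales comparable to $\rho$ and then for $G^{-1}$, and nesting to force $Q\subseteq Q'$ with $\ell(Q')<24\rho$ in the shell estimate. The only slip is that the enlarged interval has length $\le\ell/3$ rather than $<\ell/3$, with equality possible when $\ell=6\rho$. This is harmless, because an open interval of length at most $\ell/3$ cannot contain two endpoints that are at least $\ell/3$ apart.
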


\begin{proof}
A summand of $G_\varepsilon$, or any of its derivatives, can be nonzero only when
\[
 3\rho<\ell(Q)<24\rho,
 \qquad x,y\in\operatorname{int}Q.
\]
At a fixed point there are at most three possible dyadic scales and at most one cube at each scale in a prescribed grid. Thus $G_\varepsilon$ is locally a finite smooth sum and $0\le G_\varepsilon\le3$.

Choose a dyadic length $\ell$ with $6\rho\le\ell<12\rho$. For each coordinate $j$, the interval
\[
 (\min\{x_j,y_j\}-\ell/12,
   \max\{x_j,y_j\}+\ell/12)
\]
has length at most $\ell/3$. Hence one of the two scale-$\ell$ endpoint lattices has no endpoint in this interval. Choose the corresponding sign $\varepsilon_j$. The product cube in $\cD^\varepsilon$ then contains $x$ and $y$ with distance at least $\ell/12$ from every face. Its summand in $G_\varepsilon$ equals one. Therefore $G\ge1$.

On a contributing scale, $\ell(Q)$ is comparable to $\rho$. Differentiation therefore gives
\[
 |\partial_x^\alpha G_\varepsilon|+|\partial_y^\alpha G_\varepsilon|
 \le C_{n,\alpha}\rho^{-|\alpha|},
\]
and the same estimate holds for $G$. Differentiating $G\,G^{-1}=1$ inductively, separately in the $x$ and $y$ variables, we obtain the corresponding estimate for $G^{-1}$. The product rule then gives \eqref{eq:nweightderivatives}.

Take $x,y$ in opposite rectangles at an intermediate split of $Q$. They lie in different full dyadic children of $Q$. If a cube $Q'$ in the same grid contributes at $(x,y)$, then it contains both points. By nesting, it therefore contains $Q$, and we also have $\ell(Q')<24\rho$. If $\rho<\ell(Q)/24$, then no term contributes. Otherwise, the product rule, \eqref{eq:HMSnorm}, and \eqref{eq:nweightderivatives} give
\[
 |\partial_x^\alpha F_\varepsilon|+
 |\partial_y^\alpha F_\varepsilon|
 \le C_{n,s}H_{n,s}(M)\rho^{-|\alpha|},
\]
which implies \eqref{eq:nshell}.
\end{proof}

\section{Polynomial approximation and the higher-dimensional endpoint}\label{sec:hdHMS}

Retain $n,s,M$ and the localized symbols $F_\varepsilon$ from Section~\ref{sec:productgrids}.

\subsection{Tensor interpolation with separate derivatives}
Fix $s$ distinct points in $(0,1)$. For a nondegenerate compact interval $J$, let $I_J$ denote Lagrange interpolation of degree at most $s-1$ at their affine images in $J$. There is a constant $\Lambda_s\ge1$, independent of $J$, such that, for every $f\in C(J)$,
\begin{equation}\label{eq:interpstable}
 \norm{I_Jf}_{L^\infty(J)}\le\Lambda_s\norm f_{L^\infty(J)}.
\end{equation}
If $J$ consists of one point, $I_J$ means evaluation there. Tensor interpolation on a box $B\subset\RR^n$ is denoted by $I_B$, with the same convention in degenerate coordinates. Superscripts $x$ and $y$ indicate the variable in which interpolation acts. Its range lies in
\[
 \mathcal Q_{s-1}(\RR^n)
 =\operatorname{span}\{x^\alpha:0\le\alpha_j\le s-1\},
 \qquad \dim\mathcal Q_{s-1}(\RR^n)=s^n.
\]

\begin{lemma}\label{lem:tensorapprox}
Let $B,C\subset\RR^n$ be compact boxes whose side lengths are at most $h$. If $F$ is $C^s$ on a neighborhood of $B\times C$, then
\begin{align}
 \norm{I_B^xI_C^yF}_\infty
 &\le\Lambda_s^{2n}\norm F_\infty,
 \label{eq:tensorstable}\\
 \norm{F-I_B^xI_C^yF}_\infty
 &\le C_{n,s}h^s\sum_{j=1}^n
 \left(\norm{\partial_{x_j}^sF}_\infty+
       \norm{\partial_{y_j}^sF}_\infty\right).
 \label{eq:tensorerror}
\end{align}
All norms in \eqref{eq:tensorstable}--\eqref{eq:tensorerror} are supremum norms on $B\times C$. 
\end{lemma}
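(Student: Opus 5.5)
The plan is to view the tensor interpolation operator $I_B^xI_C^y$ as a composition of $2n$ one-dimensional interpolation operators. Write $B=\prod_jB_j$ and $C=\prod_jC_j$, and let $I^{x_j}_{B_j}$ denote one-dimensional Lagrange interpolation in the variable $x_j$ alone, with the other $2n-1$ variables frozen. Then
\[
 I_B^xI_C^y=I^{x_1}_{B_1}\cdots I^{x_n}_{B_n}I^{y_1}_{C_1}\cdots I^{y_n}_{C_n}.
\]
These factors commute, since they act in different variables on functions on the product set $B\times C$.

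\emph{Stability \eqref{eq:tensorstable}.} Each factor, applied pointwise in the frozen variables, satisfies $\norm{I^{x_j}_{B_j}g}_{L^\infty(B\times C)}\le\Lambda_s\norm g_{L^\infty(B\times C)}$ by \eqref{eq:interpstable}. In a degenerate coordinate the factor is evaluation, which has norm $1\le\Lambda_s$. Iterating over the $2n$ factors gives $\Lambda_s^{2n}$.

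\emph{Error \eqref{eq:tensorerror}.} Enumerate the factors as $T_1,\dots,T_{2n}$ and telescope:
\[
 F-T_1\cdots T_{2n}F=\sum_{k=1}^{2n}T_1\cdots T_{k-1}(F-T_kF).
\]
By stability, the $k$-th term is at most $\Lambda_s^{k-1}\norm{F-T_kF}_\infty$.

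For a factor $T_k=I^{x_j}_{B_j}$, apply the classical one-variable Lagrange remainder with the other variables frozen. For $g\in C^s(J)$ with $|J|\le h$, the interpolant at $s$ nodes satisfies
\[
 |g(t)-I_Jg(t)|=\frac{|g^{(s)}(\xi)|}{s!}\prod_i|t-t_i|\le\frac{h^s}{s!}\norm{g^{(s)}}_\infty.
\]
This gives $\norm{F-T_kF}_\infty\le h^s\norm{\partial_{x_j}^sF}_\infty/s!$. The degenerate case is trivial, since then $J$ is a point and $F-T_kF=0$ on $B\times C$.

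Summing over $k$ yields \eqref{eq:tensorerror} with $C_{n,s}=\Lambda_s^{2n}/s!$. The hypothesis that $F$ is $C^s$ on a neighborhood of $B\times C$ makes each restricted one-variable function $C^s$, so the remainder formula applies.

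The only point needing care is the ordering in the telescoping identity. The operators $T_1,\dots,T_{k-1}$ must be applied \emph{after} the difference $F-T_kF$, so that only pure derivatives of the original $F$ appear. If the order were reversed, the derivative $\partial_{x_j}^s$ would fall on an already interpolated function, which would require commuting derivatives with interpolation in other variables. That is also true, but it is unnecessary with the chosen ordering.

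The next step is to confirm that $I^{x_j}_{B_j}$ preserves continuity, so that the composition is well defined on $C(B\times C)$. Interpolation is a finite linear combination of point evaluations with polynomial coefficients, so this holds.
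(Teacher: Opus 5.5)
Your proof is correct and matches the paper's argument: both use the telescoping identity $I-I_1\cdots I_{2n}=\sum_j I_1\cdots I_{j-1}(I-I_j)$ and apply the one-dimensional remainder to $(I-I_j)F$ before the bounded preceding factors act. The only difference is cosmetic. You get the one-dimensional remainder from the Cauchy form of the Lagrange error. The paper instead uses $I_JP=P$ for a Taylor polynomial $P$ together with stability, giving the bound $(1+\Lambda_s)\norm{f-P}_\infty\le C_s|J|^s\norm{f^{(s)}}_\infty$.
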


\begin{proof}
The stability bound in one dimension follows from the fixed Lagrange basis on $[0,1]$ and rescaling. If $P$ is the Taylor polynomial of $f$ of degree $s-1$ at a point of $J$, then $I_JP=P$ and Taylor's formula gives
\[
 \norm{f-I_Jf}_\infty
 \le(1+\Lambda_s)\norm{f-P}_\infty
 \le C_s|J|^s\norm{f^{(s)}}_\infty.
\]

Let $I_1,\ldots,I_{2n}$ be the interpolation operators in the scalar coordinates of $B\times C$. These operators commute and have norm at most $\Lambda_s$. We have
\[
 I-I_1\cdots I_{2n}
 =\sum_{j=1}^{2n}I_1\cdots I_{j-1}(I-I_j).
\]
Applying the one-dimensional remainder estimate to $(I-I_j)F$ before the preceding bounded operators, we obtain \eqref{eq:tensorerror}. Multiplying the stability bounds gives \eqref{eq:tensorstable}.
\end{proof}

\subsection{Polynomial symbols on a finite sample}
Fix $\varepsilon\in\{-1,1\}^n$, a finite nonempty index set $\Omega$, and points $(x_i)_{i\in\Omega}$ in $\RR^n$. Choose a cube of $\cD^\varepsilon$ containing the list. Subdivide it by cycling through coordinates $1,\ldots,n$, and continue until the terminal dyadic cubes contain at most one distinct sample value each. Keep the resulting finite rooted full binary geometric tree $\mathcal T$. Empty vertices correspond to the zero projection and may be retained.

Let $\Omega_u$ be the indices whose points lie in the half-open rectangle at $u$, and let $P_u$ be the coordinate projection onto $\ell^2(\Omega_u)$. Repeated values remain in one terminal block. For every nonempty vertex, let
\begin{equation}\label{eq:samplebox}
 B_u=\prod_{j=1}^n
 \left[\min_{i\in\Omega_u}(x_i)_j,
       \max_{i\in\Omega_u}(x_i)_j\right].
\end{equation}
Then $B_u$ lies in its geometric rectangle, descendant boxes are nested, and every nonempty terminal vertex has a single-point box.

If $v$ is internal, let $Q(v)$ be the full dyadic cube being subdivided during the coordinate cycle containing the split at $v$. Points in opposite binary child rectangles lie in different full dyadic children of $Q(v)$. For $r\in\mathbb N_0$, every sample box $B_u$ reached after $nr$ further binary steps below either child, stopping at terminal vertices, has side lengths at most
\begin{equation}\label{eq:nshrink}
 2^{-r}\ell(Q(v)).
\end{equation}

For $i,j$ in distinct terminal blocks, let $v$ be the least common ancestor of their terminal vertices. Let $u_r(i;v)$ be the member of the partition after $nr$ refinements below the child of $v$ containing $i$, or the terminal vertex if it is reached earlier. Define
\begin{equation}\label{eq:ninterpolants}
 h_{\varepsilon,r}(i,j)=
 \left(I_{B_{u_r(i;v)}}^xI_{B_{u_r(j;v)}}^yF_\varepsilon\right)(x_i,x_j),
\end{equation}
and set $h_{\varepsilon,r}=0$ on the terminal diagonal blocks. The two interpolation boxes are compact subsets of opposite child rectangles. Since their finite sample coordinates have a positive gap in the splitting direction, $F_\varepsilon$ is $C^s$ on a neighborhood of their product.

\begin{lemma}\label{lem:nincrements}
Put $H=H_{n,s}(M)$ and
$f_\varepsilon=(F_\varepsilon(x_i,x_j))_{i,j\in\Omega}$. Then
\begin{equation}\label{eq:nerror}
 \maxnorm{h_{\varepsilon,r}}\le C_{n,s}K_0,
 \qquad
 \maxnorm{f_\varepsilon-h_{\varepsilon,r}}
 \le C_{n,s}H2^{-sr}.
\end{equation}
Set $a_{\varepsilon,0}=h_{\varepsilon,0}$ and
$a_{\varepsilon,r}=h_{\varepsilon,r}-h_{\varepsilon,r-1}$ for $r\ge1$. Then
\begin{equation}\label{eq:nincrementbound}
 f_\varepsilon=\sum_{r\ge0}a_{\varepsilon,r},
 \qquad
 \maxnorm{a_{\varepsilon,r}}
 \le C_{n,s}\min\{K_0,H2^{-sr}\},
\end{equation}
and the sum is finite. Each $a_{\varepsilon,r}$ is constant on every terminal rectangle. For every subtree cut, both scalar ranks of $a_{\varepsilon,r}$ in \eqref{eq:cutrank} are at most
\begin{equation}\label{eq:nrankbound}
 L_r=s^n2^{nr}.
\end{equation}
\end{lemma}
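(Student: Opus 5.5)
The four claims will be handled in turn: the sup-norm bounds \eqref{eq:nerror}, the telescoping identity with the increment bounds, constancy on terminal rectangles, and the cut-rank estimate \eqref{eq:nrankbound}.

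\emph{Bounds \eqref{eq:nerror}.} Fix $i,j$ in distinct terminal blocks with least common ancestor $v$, and put $B=B_{u_r(i;v)}$, $C=B_{u_r(j;v)}$. These boxes lie in opposite child rectangles $R_\mp$ at the split $v$ of $Q(v)$, so Lemma~\ref{lem:nlocalization} applies on a neighborhood of $B\times C$. The stability bound \eqref{eq:tensorstable} together with $|F_\varepsilon|\le K_0$ gives $|h_{\varepsilon,r}(i,j)|\le\Lambda_s^{2n}K_0$. By \eqref{eq:nshrink}, the side lengths of $B$ and $C$ are at most $h=2^{-r}\ell(Q(v))$. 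The error estimate \eqref{eq:tensorerror}, combined with \eqref{eq:nshell} for the pure derivatives $\partial_{x_j}^s,\partial_{y_j}^s$ (which satisfy $|\alpha|=s$), bounds
\[
|f_\varepsilon(i,j)-h_{\varepsilon,r}(i,j)|\le C_{n,s}h^sH\ell(Q(v))^{-s}=C_{n,s}H2^{-sr}.
\]
Here $(x_i,x_j)\in B\times C$, and the derivative bound is only needed on that product. On the terminal diagonal blocks, both $f_\varepsilon$ and $h_{\varepsilon,r}$ vanish: repeated points give $F_\varepsilon(x,x)=0$.

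\emph{Telescoping and increments.} For $r$ at least the tree depth, every $u_r(\cdot;v)$ is terminal with a single-point box. Interpolation on a degenerate box is evaluation, so $h_{\varepsilon,r}=f_\varepsilon$ entrywise; the sum is therefore finite and telescopes to $f_\varepsilon$. For $r\ge1$, the bound $\maxnorm{a_{\varepsilon,r}}\le 2C_{n,s}K_0$ is immediate. Inserting $f_\varepsilon$ and applying \eqref{eq:nerror} at levels $r$ and $r-1$ gives $\maxnorm{a_{\varepsilon,r}}\le C H(2^{-sr}+2^{-s(r-1)})\le C_{n,s}H2^{-sr}$. Taking the minimum of the two yields \eqref{eq:nincrementbound}.

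\emph{Constancy on terminal rectangles.} If $i,i'$ lie in the same terminal block, then $x_i=x_{i'}$. Moreover $u_r(i;v)=u_r(i';v)$, because this vertex depends only on the terminal vertex. Hence every $h_{\varepsilon,r}$, and so every $a_{\varepsilon,r}$, is constant on terminal rectangles.

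\emph{Cut rank.} This is the step needing care. Fix a vertex $u$ and take $i\in\Omega_u$, $j\notin\Omega_u$. The least common ancestor $v$ is then a strict ancestor of $u$, lying on the chain from the root to $u$. For fixed $v$, the indices $j$ with that ancestor form the block $\Omega_{s(v,u)}$, and these blocks are disjoint for distinct $v$; so the cut matrix is a horizontal concatenation over $v$, and one must avoid summing ranks over all $v$. The plan is to expand on the row side. For a row index $i$, the interpolation box $B_{u_r(i;v)}$ runs through at most $2^{nr}$ vertices $w\in\mathcal P_{nr}(v_\pm)$. On each such $w$, $h_{\varepsilon,r}(i,j)$ equals
\[
\sum_{\beta}\ell^{B_w}_\beta(x_i)\,g_{w,\beta}(j),
\]
where $\ell^{B_w}_\beta$ is the tensor Lagrange basis of $\mathcal Q_{s-1}$, with $s^n$ terms, and $g_{w,\beta}(j)=(I^y_{C}F_\varepsilon)(\text{node}_\beta,x_j)$. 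Row functions depend on $w$ only through the block $\Omega_w$ containing $i$.

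The key point is that for $v$ strictly above $u$ with $u$ deep, the relevant $w$'s are ancestors of $u$ or lie inside $\Omega_u$. If $u$ is within $nr$ levels below $v$, then $\Omega_u$ is cut into at most $2^{nr}$ pieces by $\mathcal P_{nr}(v_\pm)$. The rows indexed by $\Omega_u$ therefore span, over all $v$ simultaneously, a space of functions of $i$ of the form $\sum_w\mathbf 1_{\Omega_w\cap\Omega_u}(i)\,q_w(x_i)$ with $q_w\in\mathcal Q_{s-1}$. I expect to show that the nested family of partitions $\mathcal P_{nr}(v_\pm)\cap\Omega_u$, as $v$ ranges over the chain, is coarsened by the single partition $\mathcal P_{nr}(u)$ of $\Omega_u$, which has at most $2^{nr}$ blocks. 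This would give column space dimension at most $s^n2^{nr}$ for $h_{\varepsilon,r}$ and for $h_{\varepsilon,r-1}$.

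The difference $a_{\varepsilon,r}$ would then need care, since adding the two bounds gives only twice $L_r$. I would handle it by noting that $\mathcal P_{n(r-1)}$ is coarser than $\mathcal P_{nr}$, so both row spaces lie in the same piecewise-$\mathcal Q_{s-1}$ space on $\mathcal P_{nr}(u)$. That gives rank at most $L_r$ for $a_{\varepsilon,r}$ as well. The column cut follows symmetrically, using the $y$-interpolation. The main obstacle is verifying this coarsening claim uniformly in $v$; that is where I would spend the effort.
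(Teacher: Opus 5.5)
Your proposal follows the paper's proof step for step. In particular, you partition $\Omega_u$ by $\mathcal P_{nr}(u)$ and place the rows of both $h_{\varepsilon,r}$ and $h_{\varepsilon,r-1}$ in one common piecewise-$\mathcal Q_{s-1}(\RR^n)$ space, which is what gives $L_r$ rather than $2L_r$. The coarsening claim you leave open is immediate and is the paper's one-line observation: for $v\prec u$ and a leaf $\lambda$ below $u$, the cell of $\mathcal P_{nr}(v_\pm)$ containing $\lambda$ is its ancestor at depth $\min\{\operatorname{depth}(v_\pm)+nr,\operatorname{depth}(\lambda)\}$, which is at most the depth $\min\{\operatorname{depth}(u)+nr,\operatorname{depth}(\lambda)\}$ of the cell $w\in\mathcal P_{nr}(u)$ containing $\lambda$ because $v_\pm\preceq u$, so $u_r(i;v)\preceq w$ for every $v$ on the chain (and the case $r=0$ of the increment bound just uses $H\ge K_0$).
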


\begin{proof}
Equations \eqref{eq:tensorstable}, \eqref{eq:tensorerror}, \eqref{eq:nshell}, and \eqref{eq:nshrink} give \eqref{eq:nerror} on every off-diagonal terminal rectangle. Since both symbols vanish on terminal diagonal blocks, these estimates hold on the whole sample. Taking differences gives \eqref{eq:nincrementbound}; for $r=0$, we use $H\ge K_0$. If $nr$ exceeds the depth of the finite tree, then all interpolation boxes are singletons, and hence $h_{\varepsilon,r}=f_\varepsilon$.

All sample points in a terminal block coincide, so the increments are constant on terminal rectangles. It remains to prove the common cut-rank bound. Fix a vertex $u$. Recall that $\mathcal P_{nr}(u)$ indexes the partition after $nr$ refinements below $u$, retaining terminal vertices. Partition $\Omega_u$ according to $w\in\mathcal P_{nr}(u)$. For $j\notin\Omega_u$, the least common ancestor of $i\in\Omega_w$ and $j$ is a fixed strict ancestor $v$ of $u$, independent of $i$. The row interpolation cell used in \eqref{eq:ninterpolants} is an ancestor of $w$. Hence, as a function of $x_i$ on $\Omega_w$, the entries $h_{\varepsilon,r}(i,j)$ and, when $r\ge1$, $h_{\varepsilon,r-1}(i,j)$ belong to the same space $\mathcal Q_{s-1}(\RR^n)$, independently of the outside column $j$. Therefore there are scalar coefficients $b_{w,\alpha}(j)$ such that
\begin{equation}\label{eq:commonpolynomialrows}
 a_{\varepsilon,r}(i,j)=
 \sum_{\alpha\in\{0,\ldots,s-1\}^n}
 x_i^\alpha b_{w,\alpha}(j),
 \qquad i\in\Omega_w,\quad j\notin\Omega_u.
\end{equation}
There are at most $2^{nr}$ cells $w$ and $s^n$ monomials. Therefore the row rank across the cut is at most $s^n2^{nr}$. Interchanging rows and columns proves the other bound.
\end{proof}

\subsection{Summation and passage to continuous kernels}

\begin{proof}[Completion of the proof of Theorem~\ref{thm:HMSn}]
Assume $s>n/2$. If $K_0=0$, then the assertion is immediate. We may therefore assume $K_0>0$. Writing $H=H_{n,s}(M)$, we obtain from Proposition~\ref{prop:cutrank} and Lemma~\ref{lem:nincrements}
\[
 \Wa(a_{\varepsilon,r})
 \le C_{n,s}2^{nr/2}\min\{K_0,H2^{-sr}\}.
\]
Applying Lemma~\ref{lem:weaksum} to the finite expansion in \eqref{eq:nincrementbound}, uniformly at every matrix level, we obtain
\begin{equation}\label{eq:nweakseries}
 \Wa(f_\varepsilon)
 \le C_{n,s}\left(
 \sum_{r\ge0}2^{nr/4}
 \min\{K_0^{1/2},H^{1/2}2^{-sr/2}\}
 \right)^2.
\end{equation}
Choose the integer $R\ge0$ such that
$2^{sR}\le H/K_0<2^{s(R+1)}$. For $r\le R$, we use the first term in the minimum, and for $r>R$, we use the second. Since $s>n/2$, we have
\begin{align*}
 K_0^{1/2}\sum_{r=0}^R2^{nr/4}
 &\le C_nK_0^{1/2}2^{nR/4},\\
 H^{1/2}\sum_{r>R}2^{(n/4-s/2)r}
 &\le C_{n,s}K_0^{1/2}2^{nR/4}.
\end{align*}
Consequently,
\[
 \Wa(f_\varepsilon)
 \le C_{n,s}K_0\,2^{nR/2}
 \le C_{n,s}K_0^{\,1-n/(2s)}H^{\,n/(2s)}.
\]

Since $M=\sum_\varepsilon F_\varepsilon$ off the diagonal, another application of Lemma~\ref{lem:weaksum} over the $2^n$ product grids proves \eqref{eq:endpointn} for square samples. A rectangular sample is a corner of the square sample on the combined row and column lists. Thus Remark~\ref{rem:rectangular} completes the proof.
\end{proof}

\begin{proof}[Completion of the proof of Corollary~\ref{cor:continuousn}]
Choose finite-range measurable maps
$\pi_m:\RR^n\to\RR^n$ such that $\pi_m(x)\to x$ for every $x$. For example, round each coordinate to a mesh of size $2^{-m}$ on $[-m,m)^n$ and send the complement to zero. Put
\[
 M_m(x,y)=M(\pi_m(x),\pi_m(y)),
\]
with the zero convention when $\pi_m(x)=\pi_m(y)$. Then $|M_m|\le K_0$ and $M_m(x,y)\to M(x,y)$ whenever $x\ne y$.

Fix $d\ge1$ and $X\in\Sone(L_2(\RR^n;\C^d))$. Decompose $L_2(\RR^n;\C^d)$ according to the finitely many level sets of $\pi_m$ and choose an orthonormal basis in every nonzero summand. In the combined basis, $S_{M_m}\otimes\mathrm{id}_{M_d}$ is constant on the associated matrix blocks. Since every finite corner is an atomic sample covered by Theorem~\ref{thm:HMSn}, with repetitions allowed, Lemma~\ref{lem:extension} gives
\[
 \norm{(S_{M_m}\otimes\mathrm{id}_{M_d})X}_{1,\infty}
 \le C_{n,s}K_0^{\,1-n/(2s)}H_{n,s}(M)^{\,n/(2s)}\norm X_1.
\]
As in the one-dimensional case, this extension agrees with multiplication of Hilbert--Schmidt kernels by $M_m$.

Let $k_X$ be the matrix-valued Hilbert--Schmidt kernel of $X$. Since the diagonal has measure zero, dominated convergence gives
\[
 \int_{\RR^n\times\RR^n}|M_m(x,y)-M(x,y)|^2
 \norm{k_X(x,y)}_2^2\,dx\,dy\longrightarrow0.
\]
Thus the output operators converge in $\Stwo$, and hence in operator norm. By continuity of each singular value, the preceding estimate holds for $S_M\otimes\mathrm{id}_{M_d}$. Since the constant is independent of $d$, this proves the corollary.
\end{proof}

\section*{Acknowledgments and use of AI}
The author gratefully acknowledges assistance from OpenAI's GPT-6 Astra in obtaining many sharper constants and improving the exposition. The author takes full responsibility for the mathematical content and references.


\begin{thebibliography}{99}

\bibitem{BS87}
M. Sh. Birman and M. Z. Solomjak, \emph{Spectral theory of self-adjoint operators in Hilbert space}, Mathematics and its Applications (Soviet Series), D. Reidel Publishing Co., Dordrecht, 1987.

\bibitem{CGPT23}
J. M. Conde-Alonso, A. M. Gonz\'alez-P\'erez, J. Parcet, and E. Tablate, \emph{Schur multipliers in Schatten--von Neumann classes}, Ann. of Math. (2) \textbf{198} (2023), no. 3, 1229--1260. \href{https://doi.org/10.4007/annals.2023.198.3.5}{doi:10.4007/annals.2023.198.3.5}.

\bibitem{Cuc71}
I. Cuculescu, \emph{Martingales on von Neumann algebras}, J. Multivariate Anal. \textbf{1} (1971), no. 1, 17--27. \href{https://doi.org/10.1016/0047-259X(71)90027-3}{doi:10.1016/0047-259X(71)90027-3}.

\bibitem{Die17}
R. Diestel, \emph{Graph theory}, 5th ed., Graduate Texts in Mathematics, vol. 173, Springer, Berlin, 2017. \href{https://doi.org/10.1007/978-3-662-53622-3}{doi:10.1007/978-3-662-53622-3}.

\bibitem{GK69}
I. C. Gohberg and M. G. Kre\u{\i}n, \emph{Introduction to the theory of linear nonselfadjoint operators}, Translations of Mathematical Monographs, vol. 18, American Mathematical Society, Providence, RI, 1969.

\bibitem{GK70}
I. C. Gohberg and M. G. Kre\u{\i}n, \emph{Theory and applications of Volterra operators in Hilbert space}, Translations of Mathematical Monographs, vol. 24, American Mathematical Society, Providence, RI, 1970.

\bibitem{GPPW26}
A. M. Gonz\'alez-P\'erez, J. Parcet, J. P\'erez Garc\'ia, and S. Wang, \emph{Radial Schur multipliers on trees and the failure of Fourier--Schur transference}, preprint (2026), \href{https://arxiv.org/abs/2610.15881}{arXiv:2610.15881}.

\bibitem{HK12}
T. Hyt\"onen and A. Kairema, \emph{Systems of dyadic cubes in a doubling metric space}, Colloq. Math. \textbf{126} (2012), no. 1, 1--33. \href{https://doi.org/10.4064/cm126-1-1}{doi:10.4064/cm126-1-1}.

\bibitem{Mac61}
V. I. Macaev, \emph{Volterra operators obtained from self-adjoint operators by perturbation}, Dokl. Akad. Nauk SSSR \textbf{139} (1961), 810--813.

\bibitem{Mei03}
T. Mei, \emph{BMO is the intersection of two translates of dyadic BMO}, C. R. Math. Acad. Sci. Paris \textbf{336} (2003), no. 12, 1003--1006. \href{https://doi.org/10.1016/S1631-073X(03)00234-6}{doi:10.1016/S1631-073X(03)00234-6}.

\bibitem{Par26}
J. Parcet, \emph{The impact of Schur multipliers in harmonic analysis and operator algebras}, in \emph{International Congress of Mathematicians 2026}, vol. 4, SIAM, 2026, pp. 730--750. \href{https://doi.org/10.1137/25M1803371}{doi:10.1137/25M1803371}.

\bibitem{PPS89}
V. I. Paulsen, S. C. Power, and R. R. Smith, \emph{Schur products and matrix completions}, J. Funct. Anal. \textbf{85} (1989), no. 1, 151--178. \href{https://doi.org/10.1016/0022-1236(89)90050-5}{doi:10.1016/0022-1236(89)90050-5}.

\bibitem{Pin85}
A. Pinkus, \emph{$n$-Widths in approximation theory}, Ergebnisse der Mathematik und ihrer Grenzgebiete (3), vol. 7, Springer-Verlag, Berlin, 1985. \href{https://doi.org/10.1007/978-3-642-69894-1}{doi:10.1007/978-3-642-69894-1}.

\bibitem{Pis01}
G. Pisier, \emph{Similarity problems and completely bounded maps}, 2nd ed., Lecture Notes in Mathematics, vol. 1618, Springer-Verlag, Berlin, 2001.

\bibitem{Ran02}
N. Randrianantoanina, \emph{Non-commutative martingale transforms}, J. Funct. Anal. \textbf{194} (2002), no. 1, 181--212. \href{https://doi.org/10.1006/jfan.2002.3952}{doi:10.1006/jfan.2002.3952}.

\bibitem{Sio58}
M. Sion, \emph{On general minimax theorems}, Pacific J. Math. \textbf{8} (1958), no. 1, 171--176. \href{https://doi.org/10.2140/pjm.1958.8.171}{doi:10.2140/pjm.1958.8.171}.

\end{thebibliography}
\end{document}